\documentclass[12pt,a4paper]{article}
\usepackage{amsfonts,amssymb}
\usepackage{theorem}

\textwidth16.5cm
\textheight23cm
\topmargin-0.2cm
\oddsidemargin-0.2cm
{\theorembodyfont{\rm}
  \newtheorem{defi}{Definition}[section]
  \newtheorem{rem}[defi]{Remark}
  \newtheorem{exa}[defi]{Example}
  \newtheorem{exas}[defi]{Examples}}
  \newtheorem{lem}[defi]{Lemma}
  \newtheorem{prop}[defi]{Proposition}
  \newtheorem{thm}[defi]{Theorem}
  



\newcommand{\CC}{{\mathbb C}}

\newcommand{\PP}{{\mathbb P}}
\newcommand{\QQ}{{\mathbb Q}}
\newcommand{\RR}{{\mathbb R}}

\newcommand{\ZZ}{{\mathbb Z}}

\newcommand{\cG}{{\mathcal G}}
\newcommand{\cR}{{\mathcal R}}

\newcommand{\ann}{{\mathrm{ann}}}
\newcommand{\Aut}{{\mathrm{Aut}}}

\newcommand{\End}{{\mathrm{End}}}

\newcommand{\rad}{{\mathrm{rad}}}

\newcommand{\id}{{\mathrm{id}}}
\newcommand{\dis}
                 {{\mathrel{\scriptstyle{\triangle}}}}
\newcommand{\notdis}{{\not\!\!\dis}}
\newcommand{\eps}{{\varepsilon}}

\newcommand{\GL}{{\mathrm{GL}}}
\newcommand{\GE}{{\mathrm{GE}}}
\newcommand{\E}{{\mathrm{E}}}

\newcommand{\M}{{\mathrm{M}}}
\let\phi=\varphi

\let\theta=\vartheta

\newcommand{\DelimArray}[4]{\left#1\begin{array}{*{#3}{r}}#4\end{array}\right#2}
\newcommand{\SDelimArray}[4]{\hbox{\scriptsize\arraycolsep=.5\arraycolsep
  $\left#1\!\!\begin{array}{*{#3}{r}}#4\end{array}\!\!\right#2$}}

\newcommand{\Mat}{\DelimArray()}
\newcommand{\SMat}{\SDelimArray()}

\newenvironment{proof}
    {\begin{trivlist} \item {\sl Proof:}} 
    {\/ $\square$ \end{trivlist}}


\date{\normalsize
{\em Dedicated to Armin Herzer  on the occasion of his 70th birthday.}
}
\begin{document}
\title{Projective Representations\\
\vspace{.3cm}
{I. Projective lines over rings}}
\author{Andrea Blunck\thanks{Supported by a Lise Meitner
 Research Fellowship
of the Austrian Science Fund (FWF), project M529-MAT.} \and
{Hans Havlicek}}

\maketitle

\begin{abstract}
\noindent
We discuss representations of the projective line over a ring $R$
with $1$ in a projective space over some (not necessarily
commutative) field $K$. Such a representation is based upon a
$(K,R)$-bimodule $U$.
The points of the projective line over $R$ are represented
by certain subspaces of the projective space $\PP(K,U\times U)$ that
are isomorphic to one of their complements. In particular, distant
points go over to complementary subspaces, but in certain cases, also
non-distant points may have complementary images.

\noindent
{\em Mathematics Subject Classification} (1991):
51C05, 51A45, 51B05.
\end{abstract}
\parskip1mm
\parindent0cm
\section{Introduction}

Each ring $R$ with $1$, containing in its centre a (necessarily
commutative) field $F$ with $1\in F$, gives rise to a {\em chain
geometry} $\Sigma(F,R)$. For a survey, see \cite{herz-95}. In
\cite{blu+h-99a} we  introduced the concept of a
{\em generalized chain geometry}
$\Sigma(F,R)$; now $R$ is a ring with $1$
containing a (not necessarily commutative) field $F$ subject to $1\in
F$. In both cases the point set of $\Sigma(F,R)$ is $\PP(R)$, i.e.,
the {\em projective line} over $R$, and the chains are the
$F$-sublines.

In the present paper we introduce representations of the projective
line over an arbitrary ring $R$ in a projective space over some field
$K$. In a second publication our results will be applied to obtain
representations of generalized chain geometries.

The starting point of our investigation is A.\ Herzer's approach
\cite{herz-95} to obtain a model of a chain geometry $\Sigma(F,R)$
for a finite-dimensional $F$-algebra $R$ by means of a faithful right
$R$-module $U$ with finite $F$-dimension, say $r$. Here the points of
the projective line $\PP(R)$ are in one-one correspondence with
certain $(r-1)$-dimensional subspaces of the $(2r-1)$-dimensional
projective space $\PP(F,U\times U)$. In our more general setting we
use a $(K,R)$-bimodule $U$; so $U$ is a left $K$-vector space and at
the same time a right $R$-module. We neither do assume that $K$
is a subset of $R$, nor that $U$ is a faithful $R$-module, nor that
the $K$-dimension of $U$ is finite. A {\em projective
representation}\/ obtained in this way maps the points of $\PP(R)$
into the set of those subspaces of the projective space
$\PP(K,U\times U)$ that are isomorphic to one of their complements.
This mapping is injective if, and only if, $U$ is a faithful
$R$-module. In this case we speak of a {\em projective model}\/ of
$\PP(R)$.

If $U'$ is a sub-bimodule of $U$ then there are representations of
$\PP(R)$ that stem from the action of $R$ on $U'$ and $U/U'$. In
general, these {\em induced representations} are not injective, even
if $U$ is faithful. This is one of the reasons why we also discuss
non-injective representations. The examples at the end of the paper
illustrate how these induced representations can sometimes be used in
order to describe models of $\PP(R)$ in terms of $\PP(K,U\times U)$.

\section{The projective line over a ring}

Let $R$ be a ring. Throughout this paper we shall only consider
rings with $1$ (where the trivial case $1=0$ is not excluded).
The group of
invertible elements of the ring $R$ will be denoted by $R^*$.
Consider the free left $R$-module $R^2$.
Its automorphism group is the group $\GL_2(R)$ of invertible
$2\times 2$-matrices with entries in $R$.
According to \cite{blu+h-99a}, \cite{herz-95},  the {\em projective line
over $R$} is the orbit
$$\PP(R):=R(1,0)^{\GL_2(R)}$$
of the free cyclic submodule $R(1,0)$ under the action of $\GL_2(R)$.
Since $R^2=R(1,0)\oplus R(0,1)$, the elements (the {\em points})
of $\PP(R)$ are exactly
those free cyclic submodules of $R^2$ that have a free cyclic
complement.

A pair $(a,b)\in R^2$ is called
 {\em admissible}, if there exist $c,d\in R$ such that
$\SMat2{a&b\\c&d}\in \GL_2(R)$.
So $\PP(R)=\{R(a,b)\subset R^2\mid (a,b)$ admissible$\}$.
However, in certain cases the  points of $\PP(R)$ may also
be represented by  non-admissible
pairs, as we will see below.

We recall that a pair $(a,b)\in R^2$
is {\em unimodular}, if there exist $x,y\in R$ such that $ax+by=1$, i.e.,
if there is an $R$-linear
form $R^2\to R$ mapping $(a,b)$ to $1$. This is equivalent to saying that
the right ideal generated by $a$ and $b$ is the whole ring~$R$.

Obviously, each admissible
pair $(a,b)$ is unimodular.
If $R$ is commutative, then
admissibility and unimodularity are equivalent.
W.\ Benz in \cite{benz-73} considers only commutative rings and
defines the projective line using unimodular pairs.

\begin{prop}\label{LRinvert}
Let $(a,b)\in R^2$ be admissible, and let $s\in R$. Put
$(a',b'):=s(a,b)$. Then
\begin{enumerate}
\item $s$ is left invertible $\Longleftrightarrow$ $R(a,b)=R(a',b')$.
\item $s$ is right invertible $\Longleftrightarrow$ $(a',b')$ is admissible.
\end{enumerate}
\end{prop}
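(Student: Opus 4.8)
The plan is to fix once and for all a witness of admissibility: choose $c,d\in R$ with $M:=\SMat2{a&b\\c&d}\in\GL_2(R)$. Two elementary facts will be used repeatedly. First, since $(a,b)$ is admissible it is unimodular, so there are $x,y\in R$ with $ax+by=1$; hence no nonzero element of $R$ annihilates $(a,b)$ on the left, because $r(a,b)=(0,0)$ gives $r=r(ax+by)=(ra)x+(rb)y=0$. Second, $\GL_2(R)$ acts on $R^2$ (written as row vectors) from the right, so $(1,0)M=(a,b)$ and, for any $L\in\GL_2(R)$, the first row of the product $LM$ is the first row of $L$ multiplied by $M$. Recall also $(a',b')=s(a,b)=(sa,sb)$.

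For part~(1): if $s$ is left invertible, say $ts=1$, then $R(a',b')=Rs(a,b)\subseteq R(a,b)$, while at the same time $(a,b)=ts(a,b)=t(a',b')\in R(a',b')$, so $R(a,b)=R(a',b')$. Conversely, if $R(a,b)=R(a',b')=Rs(a,b)$, then $(a,b)=t\cdot s(a,b)$ for some $t\in R$, whence $(ts-1)(a,b)=(0,0)$, and the first fact above forces $ts=1$; thus $s$ is left invertible.

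For part~(2): if $(a',b')=(sa,sb)$ is admissible it is unimodular, so $(sa)x+(sb)y=1$ for suitable $x,y\in R$, i.e.\ $s(ax+by)=1$, and $s$ is right invertible. The converse is the one step requiring an idea. Given $t\in R$ with $st=1$, I claim $L:=\SMat2{s&0\\1-ts&t}$ lies in $\GL_2(R)$, with inverse $\SMat2{t&1-ts\\0&s}$. Checking that both products equal the identity matrix is a short computation using only $st=1$ and its consequences $sts=s$, $tst=t$, and $(1-ts)^2=1-ts$. Then $LM\in\GL_2(R)$, and by the second fact its first row is $(s,0)M=(sa,sb)=(a',b')$; hence $(a',b')$ is admissible.

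The only non-routine point is exhibiting the completion matrix $L$ in the converse of part~(2); the rest is bookkeeping with the relation $st=1$. One can locate $L$ by demanding that it have first row $(s,0)$ and be invertible, and solving the resulting equations with the help of the idempotent $e=1-ts$, which measures precisely the failure of $s$ to be two-sided invertible and vanishes exactly when $s$ is a unit (in which case $L$ degenerates to the diagonal matrix with diagonal entries $s$ and $s^{-1}$).
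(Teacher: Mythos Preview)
Your proof is correct and follows essentially the same route as the paper: both directions of (1) use unimodularity of $(a,b)$ in the same way, and for the converse of (2) your completion matrix $L=\SMat2{s&0\\1-ts&t}$ with inverse $\SMat2{t&1-ts\\0&s}$ is exactly the matrix~$\gamma$ the paper writes down (with $t$ in place of~$r$), after which one multiplies by the witness $M\in\GL_2(R)$ just as you do. The only differences are expository---you front-load the two auxiliary facts and add a remark on how to discover~$L$---but the mathematical content is identical.
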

\begin{proof}
(1): If there is an $l\in R$ with $ls=1$, then $(a,b)=l(a',b')$. So
$R(a,b)=R(a',b')$.\\
If $R(a,b)=R(a',b')$, then there is an $l\in R$ such that $(a,b)=l(a',b')$.
Since $(a,b)$ is admissible, it is also unimodular, and so there are $x,y\in R$
with $1=ax+by=lsax+lsby=ls$. Hence $s$ is left invertible.

(2): If $s$ is right invertible, then $sr=1$ for some $r\in R$. An easy
calculation shows that
\begin{equation}\label{matrix}\gamma=\Mat2{s&0\\1-rs&r}\in\GL_2(R), \
{\rm with} \
\gamma^{-1}=\Mat2{r&1-rs\\0&s}.
\end{equation}
There is a matrix $\SMat2{a&b\\c&d}\in\GL_2(R)$,
whence $\SMat2{a'&b'\\{*}\ &*\ }= \gamma\SMat2{a&b\\c&d}\in\GL_2(R)$,
as required.\\
If $(a',b')$ is admissible, then there are $x',y'\in R$ with $a'x'+b'y'=1$.
So $s(ax'+by')=1$, i.e., $s$ has a right inverse.
\end{proof}

Note that the statement of Proposition \ref{LRinvert}
remains true if one substitutes ``admissible''
by ``unimodular'', however, the proof of (2)``$\Rightarrow$''
then has to be modified.

Rings with the property that $ab=1$ implies $ba=1$ are called
{\em Dedekind-finite} (see e.g. \cite{lam-91}). From
Proposition~\ref{LRinvert} we obtain

\begin{prop}\label{Dedekind}
Let $R$ be a ring. Then the following are equivalent:
\begin{enumerate}
\item $R$ is Dedekind-finite.
\item If $R(a,b)\in\PP(R)$, then $(a,b)$ is admissible.
\item No point of $\PP(R)$ is properly contained in another
point of $\PP(R)$.
\end{enumerate}
\end{prop}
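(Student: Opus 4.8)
The plan is to prove the cycle of implications $(1)\Rightarrow(2)\Rightarrow(3)\Rightarrow(1)$, drawing on Proposition~\ref{LRinvert} throughout. For $(1)\Rightarrow(2)$, suppose $R(a,b)\in\PP(R)$. By definition of the projective line there is a matrix in $\GL_2(R)$ whose first row, say $(a_0,b_0)$, is admissible and satisfies $R(a_0,b_0)=R(a,b)$. Hence $(a,b)=s(a_0,b_0)$ for some $s\in R$, and since the two cyclic modules coincide, part~(1) of Proposition~\ref{LRinvert} shows $s$ is left invertible, say $ts=1$. Dedekind-finiteness then gives $st=1$, so $s\in R^*$; now part~(2) of Proposition~\ref{LRinvert} (applied with $s$ right invertible) shows that $(a,b)=s(a_0,b_0)$ is admissible.

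For $(2)\Rightarrow(3)$, I would argue contrapositively: if $R(a,b)\subsetneq R(c,d)$ are two points, then $(a,b)=s(c,d)$ for some $s\in R$ that is not left invertible — indeed if $s$ were left invertible, part~(1) of Proposition~\ref{LRinvert} would force $R(a,b)=R(c,d)$, contradicting proper containment. On the other hand $(c,d)$ is a point, hence by~(2) admissible, hence unimodular: there are $x,y$ with $cx+dy=1$, so $ax+by=s(cx+dy)=s$, meaning $s$ lies in the right ideal generated by $a,b$; but that is not yet a contradiction by itself. The cleaner route is to observe that $R(a,b)\in\PP(R)$ too, so by~(2) the pair $(a,b)$ is admissible, hence unimodular, so $1=ax'+by'=s(cx'+dy')$ exhibits a right inverse of $s$; combining this with the fact (just noted) that $s$ is not left invertible shows $R$ fails to be Dedekind-finite in a way that we will exploit in the next implication — so it is more economical to prove $(3)\Rightarrow(1)$ directly and deduce $(2)\Rightarrow(3)$ from $(2)\Rightarrow(1)\Rightarrow$ (via the already-proved $(1)\Rightarrow$ strength) or simply re-order the cycle.

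To avoid that awkwardness I would instead run the cycle as $(1)\Rightarrow(2)\Rightarrow(3)\Rightarrow(1)$ but prove $(3)\Rightarrow(1)$ first in spirit: assume $R$ is \emph{not} Dedekind-finite, so $sr=1$ but $rs\neq 1$ for some $r,s\in R$. Then $(1,0)$ is admissible, and by part~(2) of Proposition~\ref{LRinvert} the pair $s(1,0)=(s,0)$ is admissible, so $R(s,0)\in\PP(R)$; moreover $R(s,0)\subseteq R(1,0)$ since $(s,0)=s(1,0)$, and the containment is proper because equality would force $s$ to be left invertible (part~(1) of Proposition~\ref{LRinvert}), whence from $sr=1$ we would get $rs=1$, a contradiction. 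Thus $(3)$ fails, giving $(3)\Rightarrow(1)$; and then $(2)\Rightarrow(3)$ follows since in the situation $R(a,b)\subsetneq R(c,d)$ with both points, writing $(a,b)=s(c,d)$ as above, admissibility of $(a,b)$ from~(2) makes $s$ right invertible while proper containment makes $s$ not left invertible, contradicting~(1), which we may use because the equivalence $(1)\Leftrightarrow(3)$ is by then established. The main obstacle is purely organisational — arranging the three implications so that each invokes the previous ones without circularity; the ring-theoretic content is entirely contained in Proposition~\ref{LRinvert} and the definition of Dedekind-finiteness, and no new computation beyond the one-line manipulations with $s$, $r$, and unimodular pairs is needed.
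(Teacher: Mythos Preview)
Your arguments for $(1)\Rightarrow(2)$ and for $(3)\Rightarrow(1)$ are correct and are exactly what Proposition~\ref{LRinvert} is set up to deliver. The gap is in your $(2)\Rightarrow(3)$: assuming $(2)$ and a proper inclusion $R(a,b)\subsetneq R(c,d)$ of points, you correctly deduce that $s$ (with $(a,b)=s(c,d)$) is right invertible but not left invertible, i.e.\ $\neg(1)$. You then declare a contradiction ``because the equivalence $(1)\Leftrightarrow(3)$ is by then established''. It is not. At that stage you have only $(1)\Rightarrow(2)$ and $(3)\Rightarrow(1)$; nothing yet gives $(1)\Rightarrow(3)$. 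What your argument actually proves is $(2)\wedge\neg(3)\Rightarrow\neg(1)$, i.e.\ $(1)\wedge(2)\Rightarrow(3)$. Combined with the other two implications this yields $(1)\Leftrightarrow(3)$ and $(1)\Rightarrow(2)$, but \emph{not} $(2)\Rightarrow(1)$ or $(2)\Rightarrow(3)$: the cycle does not close. The difficulty is not merely organisational; a genuine step is missing.

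The missing step is short, and in fact you came close to it in your second paragraph when you contemplated proving $(2)\Rightarrow(1)$ directly. Here it is: assume $(2)$ and take any $r,s\in R$ with $sr=1$. Since $r$ is left invertible, Proposition~\ref{LRinvert}(1) gives $R(r,0)=R(1,0)\in\PP(R)$; hence by $(2)$ the pair $(r,0)$ is admissible, and Proposition~\ref{LRinvert}(2) forces $r$ to be right invertible, say $rt=1$. Then $t=(sr)t=s(rt)=s$, so $rs=1$. This proves $(2)\Rightarrow(1)$ outright. With that in hand, your argument for $(2)\Rightarrow(3)$ becomes valid (you may now legitimately invoke $(1)$), or you can simply run the cycle $(2)\Rightarrow(1)\Rightarrow(2)$ together with $(1)\Leftrightarrow(3)$. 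The paper itself omits the proof, noting only that it follows from Proposition~\ref{LRinvert}; the argument above is the intended content.
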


\begin{rem}\label{contained}
If $R$ is not Dedekind-finite, then each point
$p\in\PP(R)$ belongs to an infinite sequence of
points
$$\ldots\subsetneq p_{-2}\subsetneq p_{-1}\subsetneq p_0=p
\subsetneq p_1\subsetneq p_2\subsetneq\ldots$$
Namely, let $\gamma$ be the matrix of formula (\ref{matrix}),
where $sr=1\ne rs$.
Then Proposition
\ref{LRinvert} shows that the points $p_i:=p^{\gamma^i}$ are as desired.
\end{rem}

Recall that according to F.D.\ Veldkamp \cite{veld-85}, \cite{veld-95}
the ring $R$ has {\em stable rank~$2$}, if for each unimodular pair
$(a,b)\in R^2$ there is a  $c\in R$ such that $a+bc$ is right invertible.
The following results on rings of stable rank~$2$ can be found in
\cite{veld-85} (results 2.10 and 2.11):

\begin{rem}\label{stable}
Let $R$ be of stable rank~$2$. Then
 $R$ is Dedekind-finite and each unimodular $(a,b)\in R^2$ is admissible.
\end{rem}

Note that  Herzer's definition of stable rank~$2$ in \cite{herz-95}
seems to be stronger but actually coincides with Veldkamp's because of
\ref{stable}. Moreover, it is not necessary to distinguish between
left and right stable rank $2$ because by \cite{veld-85}, 2.2,
the opposite ring (with reversed multiplication) of a ring of stable
rank~$2$ also has stable rank~$2$.

Using results of \cite{lam-91}, \S\ 20, one obtains that each
left (or right) artinian ring has stable rank~$2$ (called ``left stable
range~$1$'' in \cite{lam-91}). We shall need the following special case:

\begin{rem}\label{finDim}
Assume that $R$ contains a subfield $K$ such that $R$ is a
{\em finite-dimensional} left (or right) vector space over $K$.
Then $R$ is of stable rank~$2$. In particular, $R$ is Dedekind-finite.
\end{rem}

Here by a {\em subfield} we mean a not necessarily commutative field
$K\subset R$ with $1\in K$.


We turn back to the projective line over an arbitrary ring.
The point set $\PP(R)$ is endowed with the symmetric
relation $\dis$ ({\em ``distant''}) defined by
$$\dis:=\{R(1,0), R(0,1)\}^{\GL_2(R)}$$
i.e., two points $p,q\in \PP(R)$ are distant
exactly if there is a $ \gamma\in\GL_2(R)$ mapping  $R(1,0)$
to  $p$
and  $R(0,1)$ to  $q$.
Distance can also be expressed  in terms of coordinates:
\begin{rem}
Let $p=R(a,b)$, $q= R(c,d)\in \PP(R)$ with admissible $(a,b)$, $(c,d)$.
Then $$p\dis q\iff \Mat2{a&b\\c&d}\in \GL_2(R).$$
\end{rem}
Note that this is independent of the choice of the {\em admissible}
representatives $(a,b)$, $(c,d)$. In addition,
$\dis$ is anti-reflexive exactly if $1\ne 0$; compare \cite{herz-95}.

By definition, two points of $\PP(R)$ are distant if, and only if,
they are complementary submodules of
$R^2$. There are several possibilities for points being non-distant,
which all can occur as the following examples show:

\begin{exas}\label{nondist}
\begin{enumerate}
\item Let $R$ be a ring that is not Dedekind-finite. Let
$\gamma\in\GL_2(R)$ be defined as in Remark \ref{contained}. Then
$p=R(1,0)^\gamma=R(s,0)$ and $q=R(0,1)$ are non-distant: They
have a trivial intersection but they do not span $R^2$. \\
Now consider $p'=R(1,0)^{\gamma^{-1}}=R(r,1-rs)$ (see formula (\ref{matrix})).
Then $p'$ and $q$ are non-distant: They span $R^2$, but
$(1-rs)(r,1-rs)=(0,1-rs)\ne (0,0)$ lies in their intersection.
\item
Let $R$ contain a subfield $K$ such that $R$, considered as
left vector space over $K$, has finite dimension $n$. Then all
points of $\PP(R)$ are $n$-dimensional subspaces of the left vector
space $R^2$. In particular, two points have a trivial intersection
exactly if they span $R^2$.
\end{enumerate}
\end{exas}

In Example \ref{poly} below
we will see an example of a commutative (and hence Dedekind-finite)
ring where non-distant points intersect trivially.
\section{Homomorphisms}

Now we want to study mappings between projective lines over rings
that are induced by ring homomorphisms.

From now on, we will follow the convention that whenever a
point of $\PP(R)$ is given in the form  $R(a,b)$,
we always assume that the pair $(a,b)\in R^2$ is admissible.

Let $R,S$ be rings.
The distance relations on $\PP(R)$ and $\PP(S)$ are denoted by $\dis_R$
and $\dis_S$, respectively.
Consider a ring homomorphism $\phi:R\to S$,
where we always  suppose that $1_R$ is mapped to $1_S$.
Associated to $\phi$ is a homomorphism
$\M(2\times2,R)\to \M(2\times2,S)$, mapping $\SMat2{a&b\\c&d}$ to
$\SMat2{a^\phi&b^\phi\\c^\phi&d^\phi}$, which will also be denoted by $\phi$.
Its restriction to $\GL_2(R)$ is a group homomorphism into $\GL_2(S)$.
This implies that if $(a,b)\in R^2$ is admissible, so is $(a^\phi,b^\phi)
\in S^2$, and we can introduce the mapping
$$\bar\phi:\PP(R)\to \PP(S):R(a,b)\mapsto S(a^\phi,b^\phi).$$

\begin{prop}\label{homo}
Let $\phi:R\to S$ be a ring homomorphism. Then for
$\bar\phi:\PP(R)\to\PP(S)$
the following statements hold:
\begin{enumerate}
\item $\bar\phi$ {\em preserves distance}, i.e.,
 $\forall p,q\in\PP(R):\ p\dis_R q\Rightarrow
 p^{\bar\phi} \dis_S q^{\bar\phi}$.
\item $\bar\phi$ is {\em compatible with the action of $\GL_2(R)$},
i.e. $\forall p\in\PP(R)\
\forall \gamma\in\GL_2(R): \ p^{\gamma\bar\phi}=p^{\bar\phi\gamma^\phi}$.
\item $\bar\phi$ is injective if, and only if, $\phi$ is.
\end{enumerate}
\end{prop}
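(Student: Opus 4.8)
The plan is to verify the three assertions in order, using the group homomorphism $\phi:\GL_2(R)\to\GL_2(S)$ and the compatibility with the matrix action. For (1), suppose $p\dis_R q$. By definition of $\dis_R$ there is a $\gamma\in\GL_2(R)$ with $R(1,0)^\gamma=p$ and $R(0,1)^\gamma=q$. Applying the matrix version of $\phi$ we obtain $\gamma^\phi\in\GL_2(S)$, and I claim $S(1,0)^{\gamma^\phi}=p^{\bar\phi}$ and $S(0,1)^{\gamma^\phi}=q^{\bar\phi}$. This is essentially a restatement of (2), so I would actually prove (2) first and deduce (1) from it: once (2) is available, write $p=R(1,0)^\gamma$, $q=R(0,1)^\gamma$, so $p^{\bar\phi}=R(1,0)^{\gamma\bar\phi}=R(1,0)^{\bar\phi\gamma^\phi}=S(1,0)^{\gamma^\phi}$ and likewise for $q$; hence $p^{\bar\phi}\dis_S q^{\bar\phi}$ by the definition of $\dis_S$.

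For (2) I would argue by direct computation on representatives. Let $p=R(a,b)$ with $(a,b)$ admissible and let $\gamma=\SMat2{e&f\\g&h}\in\GL_2(R)$. Then $p^\gamma=R\big((a,b)\gamma\big)=R(ae+bg,\ af+bh)$, and this pair is again admissible because $\SMat2{a&b\\ *&*}\gamma\in\GL_2(R)$. Applying $\bar\phi$ gives $S\big((ae+bg)^\phi,(af+bh)^\phi\big)$. Since $\phi$ is a ring homomorphism, $(ae+bg)^\phi=a^\phi e^\phi+b^\phi g^\phi$ and similarly for the second coordinate, so this equals $S\big((a^\phi,b^\phi)\gamma^\phi\big)=\big(S(a^\phi,b^\phi)\big)^{\gamma^\phi}=p^{\bar\phi\gamma^\phi}$. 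On the other hand $p^{\gamma\bar\phi}$ is by definition $S\big((ae+bg)^\phi,(af+bh)^\phi\big)$ as well, so $p^{\gamma\bar\phi}=p^{\bar\phi\gamma^\phi}$. The only point needing care is that $\bar\phi$ is well defined, i.e.\ independent of the choice of admissible representative; but if $R(a,b)=R(a',b')$ with both pairs admissible, then by Proposition \ref{LRinvert}(1) there is a unit $l\in R^*$ with $(a,b)=l(a',b')$, hence $(a^\phi,b^\phi)=l^\phi(a'^\phi,b'^\phi)$ with $l^\phi\in S^*$, so $S(a^\phi,b^\phi)=S(a'^\phi,b'^\phi)$.

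For (3), one direction is trivial: if $\bar\phi$ is injective then so is $\phi$, since an element $a\in R$ can be read off from the point $R(1,a)\in\PP(R)$ (which is admissible, as $\SMat2{1&a\\0&1}\in\GL_2(R)$), and $R(1,a)^{\bar\phi}=S(1,a^\phi)$ determines $a^\phi$; thus $a^\phi=b^\phi$ forces $R(1,a)^{\bar\phi}=R(1,b)^{\bar\phi}$, hence $R(1,a)=R(1,b)$, hence $a=b$. For the converse, assume $\phi$ is injective and suppose $R(a,b)^{\bar\phi}=R(c,d)^{\bar\phi}$, i.e.\ $S(a^\phi,b^\phi)=S(c^\phi,d^\phi)$. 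Then there is a unit $t\in S^*$ with $(a^\phi,b^\phi)=t(c^\phi,d^\phi)$. The obstacle here is that $t$ need not a priori lie in the image of $\phi$, so I cannot immediately pull it back; I would circumvent this by using the group homomorphism $\phi$ on $\GL_2$ together with admissibility. Write $q=R(c,d)$ and bring it into standard position: choose $\delta\in\GL_2(R)$ with $R(1,0)^\delta=q$. Then, using (2), the point $R(a,b)^{\bar\phi}=R(c,d)^{\bar\phi}=R(1,0)^{\delta\bar\phi}=S(1,0)^{\delta^\phi}$, so replacing $(a,b)$ by $(a,b)\delta^{-1}$ (still admissible) we may assume $q=R(1,0)$, whence $S(a^\phi,b^\phi)=S(1,0)$ forces $b^\phi=0$ and $a^\phi\in S^*$. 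Injectivity of $\phi$ gives $b=0$; and $a^\phi\in S^*$ together with admissibility of $(a,0)$ — which by Proposition \ref{LRinvert} applied to $R(1,0)=R\cdot a\cdot(1,0)$ means $a$ is a unit once we know it is right invertible — forces $a\in R^*$: indeed $(a,0)$ admissible implies $a$ right invertible in $R$ by \ref{LRinvert}(2), while $a^\phi$ invertible and $\phi$ injective give that the left inverse computation transports back, so $a\in R^*$ and $R(a,b)=R(1,0)=q$. Hence $\bar\phi$ is injective.
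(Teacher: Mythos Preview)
Your argument is correct. Parts (1), (2), the well-definedness check, and the direction ``$\bar\phi$ injective $\Rightarrow$ $\phi$ injective'' of (3) are exactly what the paper has in mind (it dismisses (1) and (2) as not deserving attention and handles the easy direction of (3) via $R(1,a)$ just as you do).

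The genuine difference is in the direction ``$\phi$ injective $\Rightarrow$ $\bar\phi$ injective''. You normalize via the $\GL_2(R)$-action to reduce to $q=R(1,0)$, then deduce $b=0$ from injectivity and combine right-invertibility of $a$ (from admissibility of $(a,0)$) with the pull-back of the left inverse through $\phi$ to get $a\in R^*$. The paper instead stays with the original pairs and shows directly that the scalar $s\in S^*$ satisfying $(a^\phi,b^\phi)=s(c^\phi,d^\phi)$ already lies in $R^\phi$: using unimodularity of $(c,d)$, pick $x,y\in R$ with $cx+dy=1$, so that $s=s\cdot 1^\phi=s(cx+dy)^\phi=a^\phi x^\phi+b^\phi y^\phi\in R^\phi$, and symmetrically $s^{-1}\in R^\phi$; injectivity then gives $s\in (R^*)^\phi$. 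The paper's trick is shorter and avoids the reduction step, while your approach makes the role of property~(2) more visible and is perhaps more geometric in spirit. Both are valid; the unimodularity computation is worth knowing since it recurs in related contexts.
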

\begin{proof} Only (3) deserves our attention. Let $\phi$
be injective.  Assume  that
$R(a,b)^{\bar\phi}=R(c,d)^{\bar\phi}$ holds
for $R(a,b)$, $R(c,d)\in\PP(R)$. Then
there is an $s\in S^*$ with $(a^\phi,b^\phi)=s(c^\phi,d^\phi)$.
Since $(c,d)\in R^2$ is
unimodular, there are $x,y\in R$ with $s=s1=s1^\phi=s(cx+dy)^\phi=
a^\phi x^\phi+b^\phi y^\phi\in R^\phi$.
Analogously, one sees that $s^{-1}\in R^\phi$. Hence $s\in (R^\phi)^*$,
which equals $(R^*)^\phi$, since $\phi$ is injective. So $R(a,b)=R(c,d)$.\\
Now let $\bar\phi$ be injective, and assume $a^\phi=b^\phi$ for $a,b\in R$.
Then $R(1,a)^{\bar\phi}=S(1,a^\phi)=S(1,b^\phi)=R(1,b)^{\bar\phi}$,
whence $R(1,a)=R(1,b)$ and so $a=b$.
\end{proof}

We call the mapping $\bar\phi:\PP(R)\to\PP(S)$ the {\em homomorphism
of projective lines induced by $\phi:R\to S$}.
Such homomorphisms map distant points to distant points.
However,  they may also map non-distant points to distant points: Consider
e.g. the homomorphism $\PP(\ZZ)\to\PP(\QQ)$ induced by the natural
inclusion $\ZZ\to\QQ$. This injective homomorphism actually is a bijection,
since each element of $\PP(\QQ)$ can be represented by a pair of
relatively prime integers.
The points $\ZZ(1,0)$ and
$\ZZ(1,2)$ are non-distant because $\SMat2{1&0\\1&2}$ is not
invertible over~$\ZZ$. However, their image points $\QQ(1,0)$
and $\QQ(1,2)$ are different and hence distant in $\PP(\QQ)$.

The following gives a characterization of the homomorphisms $\bar\phi$
that
preserve also non-distance. By $\rad(R)$ we denote the
(Jacobson) {\em radical}
of the ring $R$ (cf.\ \cite{lam-91}).

\begin{prop}\label{nondistEmb}
Let $\bar\phi:\PP(R)\to\PP(S)$ be  induced by the
ring homomorphism $\phi:R\to S$. Then the  following statements are
equivalent:
\begin{enumerate}
\item $\forall p,q\in\PP(R):\ p^{\bar\phi}\dis_S q^{\bar\phi}
\Rightarrow p\dis_R q$.
\item $\forall y\in R: y^\phi\in S^*\Rightarrow y\in R^*$.
\item $\ker(\phi)\subset\rad(R)$ and $(R^\phi)^*=S^*\cap R^\phi$.
\end{enumerate}
\end{prop}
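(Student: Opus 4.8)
The plan is to prove the three implications $(1)\Rightarrow(2)\Rightarrow(3)\Rightarrow(1)$, translating the geometric condition on non-distance into the algebraic condition on invertibility and then relating both to the radical.

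\textbf{Step $(1)\Rightarrow(2)$.} Given $y\in R$ with $y^\phi\in S^*$, I want $y\in R^*$. The natural test points are $p=R(1,0)$ and $q=R(y,1)$ (which is admissible, via $\SMat2{y&1\\-1&0}\in\GL_2(R)$). Now $p\dis_R q$ holds iff $\SMat2{1&0\\y&1}\in\GL_2(R)$, and since that matrix is always invertible with inverse $\SMat2{1&0\\-y&1}$, this pair is always distant — so that choice is useless. Instead I should use $p=R(1,0)$ and $q=R(1,y)$: then $p\dis_R q \iff \SMat2{1&0\\1&y}\in\GL_2(R)\iff y\in R^*$ (after a row operation the determinant-type obstruction is exactly $y$; more precisely $\SMat2{1&0\\1&y}$ is invertible iff $y$ is, since $\SMat2{1&0\\-1&1}\SMat2{1&0\\1&y}=\SMat2{1&0\\0&y}$). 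Applying $\bar\phi$ gives $p^{\bar\phi}=S(1,0)$, $q^{\bar\phi}=S(1,y^\phi)$, and $S(1,0)\dis_S S(1,y^\phi)$ because $y^\phi\in S^*$. Then (1) forces $p\dis_R q$, i.e. $y\in R^*$.

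\textbf{Step $(2)\Rightarrow(3)$.} The equality $(R^\phi)^*=S^*\cap R^\phi$: the inclusion $\subseteq$ is automatic (a unit of $R^\phi$ is a unit of $S$ lying in $R^\phi$); for $\supseteq$, take $s\in S^*\cap R^\phi$, write $s=y^\phi$, apply (2) to get $y\in R^*$, hence $s=y^\phi\in(R^*)^\phi\subseteq(R^\phi)^*$. For $\ker(\phi)\subseteq\rad(R)$: recall the characterization of the radical, $\rad(R)=\{x\in R\mid 1+rx\in R^*\ \forall r\in R\}$ (equivalently $1-rxr'$ is always a unit; cf.\ \cite{lam-91}). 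Let $x\in\ker(\phi)$ and $r\in R$. Then $(1+rx)^\phi=1+r^\phi x^\phi=1\in S^*$, so by (2) $1+rx\in R^*$. As $r$ was arbitrary, $x\in\rad(R)$.

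\textbf{Step $(3)\Rightarrow(1)$.} Suppose $p^{\bar\phi}\dis_S q^{\bar\phi}$; write $p=R(a,b)$, $q=R(c,d)$ with admissible representatives, so $M:=\SMat2{a&b\\c&d}$ is a matrix over $R$ with $M^\phi\in\GL_2(S)$. I must show $M\in\GL_2(R)$. Since $(a,b)$ is admissible there is a completion making it the first row of a matrix in $\GL_2(R)$; conjugating (i.e.\ multiplying $M$ on the left by the inverse of that matrix) reduces to the case $a=1,b=0$, so $M=\SMat2{1&0\\c&d}$ and $M^\phi=\SMat2{1&0\\c^\phi&d^\phi}$. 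After subtracting $c^\phi$ times the first row of $M^\phi$ from the second (an operation given by left multiplication by an element of $\GL_2$ with trivial image on the relevant entry — more carefully, $\SMat2{1&0\\-c&1}M=\SMat2{1&0\\0&d}$), invertibility of $M^\phi$ is equivalent to $d^\phi\in S^*$; then $d^\phi\in S^*\cap R^\phi=(R^\phi)^*$... but I actually need $d\in R^*$, which requires (2), not just $(R^\phi)^*=S^*\cap R^\phi$. Here is where the $\ker(\phi)\subseteq\rad(R)$ half enters: $d^\phi\in S^*$ means $d^\phi$ has an inverse $e^\phi$ for some $e\in R$ (using $(R^\phi)^*=(R^*\!\text{-image})$... rather, using surjectivity onto $R^\phi$), so $(de)^\phi=1=(ed)^\phi$, i.e.\ $1-de,1-ed\in\ker(\phi)\subseteq\rad(R)$; hence $de=1-(1-de)\in R^*$ and likewise $ed\in R^*$, from which $d\in R^*$ by a standard argument. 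Therefore $M=\SMat2{1&0\\c&d}\in\GL_2(R)$, so $p\dis_R q$.

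\textbf{Main obstacle.} The only real subtlety is the last step: recovering honest invertibility in $R$ from invertibility of the image, which needs the radical condition, not merely the statement about units of $R^\phi$. The slick way to package $(3)\Rightarrow(1)$ is to first observe that (3) already implies (2) — indeed if $y^\phi\in S^*$ then $y^\phi\in S^*\cap R^\phi=(R^\phi)^*$, so $y^\phi z^\phi=z^\phi y^\phi=1$ for some $z\in R$, whence $1-yz,1-zy\in\ker(\phi)\subseteq\rad(R)$, giving $yz,zy\in R^*$ and then $y\in R^*$ — and then run the $\GL_2$-reduction above using (2). This makes the proof a clean cycle; the matrix reductions are the routine part and I would not spell them out in full.
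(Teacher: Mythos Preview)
Your proof is correct and, on the equivalence $(1)\Leftrightarrow(2)$, follows exactly the paper's line: test the pair $R(1,0)$, $R(1,y)$ for $(1)\Rightarrow(2)$, and for the converse use the $\GL_2(R)$-action to normalize $p$ to $R(1,0)$ and then read off invertibility of the second coordinate of $q$. One small slip: in your reduction ``multiplying $M$ on the left by the inverse of that matrix'' should be \emph{on the right} (since the action of $\GL_2(R)$ on row pairs is by right multiplication), i.e.\ with $N=\SMat2{a&b\\e&f}\in\GL_2(R)$ one has $MN^{-1}=\SMat2{1&0\\c'&d'}$; this does not affect the argument.

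The only substantive difference from the paper is that for $(2)\Leftrightarrow(3)$ the authors simply cite \cite{fer+v-85}, Lemma~1.5, whereas you supply the argument in full via the characterization $\rad(R)=\{x:1+Rx\subseteq R^*\}$ and the observation that $yz,zy\in 1+\ker(\phi)\subseteq 1+\rad(R)\subseteq R^*$ forces $y\in R^*$. So your write-up is in fact more self-contained than the paper's.
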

\begin{proof}
(1) $\Rightarrow$ (2): For $r\in R$ with $r^\phi\in S^*$ we have
$S(1,0)\dis_S S(1,r^\phi)$. Hence  condition
(1) implies $R(1,0)\dis_R R(1,r)$
and thus $r\in R^*$.\\
(2) $\Rightarrow$ (1):
Let $p^{\bar\phi}\dis_S q^{\bar\phi}$ hold for $p,q\in \PP(R)$.
Choose $\gamma\in\GL_2(R)$ with $p^\gamma=R(1,0)$. Then $q^\gamma=R(x,y)$
for a certain admissible pair $(x,y)\in R^2$. By \ref{homo}(2), we have
$S(1,0)=p^{\gamma{\bar\phi}}=p^{{\bar\phi}\gamma^\phi}
\dis_S q^{{\bar\phi}\gamma^\phi}=q^{\gamma{\bar\phi}}=
S(x^\phi,y^\phi)$, and hence  $y^\phi\in S^*$.  So, by (2),
$y\in R^*$. This implies $p^\gamma\dis_R q^\gamma$ and thus
also $p\dis_R q$.\\
(2) $\Leftrightarrow$ (3):
See \cite{fer+v-85}, Lemma 1.5.
\end{proof}

As the example $\PP(\ZZ)\to\PP(\QQ)$
above shows, the ring homomorphism $\phi$ need not be surjective
if $\bar\phi$ is.

We now consider the case where $\phi:R\to S$
is a surjective homomorphism of rings. It is not clear whether
in general
$\bar\phi$ also is surjective. We study special cases.

According to J.R. Silvester \cite{silv-81} we introduce the following
notions for a ring $R$:

The {\em elementary linear group} $\E_2(R)$ is the subgroup of
$\GL_2(R)$ generated by the {\em elementary transvections}, i.e,
by all matrices of the form $\SMat2{1&0\\x&1}$ or $\SMat2{1&x\\0&1}$
($x\in R$).
The group $\GE_2(R)$ is the subgroup of $\GL_2(R)$
generated by $\E_2(R)$ and all diagonal matrices $\SMat2{a&0\\0&b}\in
\GL_2(R)$. Note that $\E_2(R)$ is normal in $\GE_2(R)$.
If $\GE_2(R)=\GL_2(R)$, then $R$ is called a {\em $\GE_2$-ring}.

Examples of $\GE_2$-rings and also of rings that are not
$\GE_2$-rings can be found in \cite{silv-81},\ p.114 and p.121,
respectively. Important
for us is the following:

\begin{rem}\label{SR2GE2}(See \cite{hahn+om-89}, 4.2.5.)
Let $R$ be a ring of stable rank $2$. Then $R$ is a $\GE_2$-ring.
\end{rem}

\begin{lem}\label{orbitE2}
Let $R$ be a $\GE_2$-ring. Then $\PP(R)=R(1,0)^{\E_2(R)}$.
\end{lem}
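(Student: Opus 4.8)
The plan is to show that every point of $\PP(R)$ already lies in the orbit of $R(1,0)$ under the elementary subgroup $\E_2(R)$, using the hypothesis that $\GE_2(R)=\GL_2(R)$ to reduce from the full linear group to $\E_2(R)$ together with diagonal matrices. Since $\PP(R)=R(1,0)^{\GL_2(R)}$ by definition, it suffices to prove that $R(1,0)^{\gamma}\in R(1,0)^{\E_2(R)}$ for every $\gamma\in\GL_2(R)$. By the $\GE_2$-hypothesis, any such $\gamma$ is a product of elementary transvections and invertible diagonal matrices; by the normality of $\E_2(R)$ in $\GE_2(R)$ (noted just before the lemma), we may collect all the diagonal factors and write $\gamma=\eps\,\delta$ with $\eps\in\E_2(R)$ and $\delta=\SMat2{a&0\\0&b}$ a diagonal matrix in $\GL_2(R)$. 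Then $R(1,0)^{\gamma}=R(1,0)^{\delta\eps}$ after renaming (using that $\eps\mapsto\delta\eps\delta^{-1}\in\E_2(R)$), so the whole problem comes down to the single diagonal matrix.

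The key computation is therefore: $R(1,0)^{\delta}=R(a,0)$ where $a\in R^*$ (since $\delta\in\GL_2(R)$ forces $a,b\in R^*$; this is the standard fact that a diagonal matrix is invertible iff both diagonal entries are). Now $R(a,0)=R(1,0)$ because $a$ is a unit, hence left invertible, and $R(1,0)=a^{-1}R(a,0)=Ra^{-1}(a,0)\subset R(a,0)$ and conversely $R(a,0)=Ra(1,0)\subset R(1,0)$. Thus $R(1,0)^{\delta}=R(1,0)$, and so $R(1,0)^{\gamma}=R(1,0)^{\delta\eps}=\big(R(1,0)^{\delta}\big)^{\eps}=R(1,0)^{\eps}\in R(1,0)^{\E_2(R)}$. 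This gives $\PP(R)\subset R(1,0)^{\E_2(R)}$; the reverse inclusion is trivial since $\E_2(R)\subset\GL_2(R)$.

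The only point requiring care — and the main (minor) obstacle — is the bookkeeping in pushing the diagonal matrices past the elementary ones: because $R$ need not be commutative, one should not expect the collected diagonal matrix to be literally the product of the original diagonal factors, but normality of $\E_2(R)$ in $\GE_2(R)$ makes this harmless, as conjugating an elementary matrix by a diagonal one again yields an element of $\E_2(R)$ (indeed $\SMat2{a&0\\0&b}\SMat2{1&x\\0&1}\SMat2{a&0\\0&b}^{-1}=\SMat2{1&axb^{-1}\\0&1}$, and similarly for lower transvections). Iterating this lets us move every diagonal factor to the right end of the word, where it acts as shown above and disappears. An entirely parallel remark yields the companion fact $\PP(R)=R(0,1)^{\E_2(R)}$, should it be needed.
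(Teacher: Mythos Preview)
Your argument is correct and follows essentially the same route as the paper: use normality of $\E_2(R)$ in $\GE_2(R)=\GL_2(R)$ to factor $\gamma$ as a diagonal matrix times an element of $\E_2(R)$, then observe that the diagonal part fixes $R(1,0)$. The paper simply writes $\gamma=\delta\eta$ with $\delta$ diagonal on the \emph{left} from the outset (avoiding your detour of first writing $\gamma=\eps\,\delta$ and then re-conjugating; note also that the required conjugate is $\delta^{-1}\eps\,\delta$, not $\delta\eps\delta^{-1}$, though by normality either lies in $\E_2(R)$).
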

\begin{proof}
Let $p=R(1,0)^\gamma\in\PP(R)$, with $\gamma\in\GL_2(R)$.
Since $\GL_2(R)=\GE_2(R)$ and $\E_2(R)$ is normal in $\GE_2(R)$,
we have $\gamma=\delta\eta$, where $\delta=\SMat2{a&0\\0&b}$
and $\eta\in\E_2(R)$. So $p=R(1,0)^\gamma=R(a^{-1},0)^\gamma
=R(1,0)^\eta\in R(1,0)^{\E_2(R)}$.
\end{proof}

Now we can state conditions that imply that with $\phi:R\to S$ also
$\bar\phi$ is surjective.

\begin{prop}\label{surject}
Let $\phi:R\to S$ be a surjective homomorphism of rings.
Then also $\bar\phi:\PP(R)\to\PP(S)$ is surjective, if one
of the following conditions is satisfied:
\begin{enumerate}
\item $S$ is a $\GE_2$-ring.
\item $\ker(\phi)\subset\rad(R)$.
\item $R$ is the internal direct product of $\ker(\phi)$ and some
ideal $R'\subset R$.
\end{enumerate}
\end{prop}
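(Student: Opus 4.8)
The plan is to verify each of the three sufficient conditions separately, in each case showing that an arbitrary point $S(\bar a,\bar b)\in\PP(S)$ lies in the image of $\bar\phi$. The common strategy will be to lift a suitable $\GL_2(S)$-matrix (or a word in elementary transvections) along $\phi$ to a matrix over $R$, using surjectivity of $\phi$ and the fact that $\phi$ induces a group homomorphism $\GL_2(R)\to\GL_2(S)$ together with the compatibility relation \ref{homo}(2), namely $p^{\gamma\bar\phi}=p^{\bar\phi\gamma^\phi}$.

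For (1): assume $S$ is a $\GE_2$-ring. By Lemma \ref{orbitE2}, every point of $\PP(S)$ has the form $S(1,0)^{\bar\eta}$ for some $\bar\eta\in\E_2(S)$. Write $\bar\eta$ as a product of elementary transvections $\SMat2{1&0\\{\bar x}&1}$ and $\SMat2{1&{\bar x}\\0&1}$ with ${\bar x}\in S$. Since $\phi$ is surjective, each ${\bar x}$ is of the form $x^\phi$ with $x\in R$, so we obtain $\eta\in\E_2(R)$ with $\eta^\phi=\bar\eta$. Then $R(1,0)^\eta\in\PP(R)$ is a point whose image is $R(1,0)^{\eta\bar\phi}=R(1,0)^{\bar\phi\eta^\phi}=S(1,0)^{\bar\eta}$, using \ref{homo}(2) and $R(1,0)^{\bar\phi}=S(1,0)$. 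Hence $\bar\phi$ is surjective.

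For (2): assume $\ker(\phi)\subset\rad(R)$. Given $S(\bar a,\bar b)\in\PP(S)$, there is a matrix $\SMat2{\bar a&\bar b\\\bar c&\bar d}\in\GL_2(S)$. Pick preimages $a,b,c,d\in R$ of $\bar a,\bar b,\bar c,\bar d$ under $\phi$, forming $\mu=\SMat2{a&b\\c&d}\in\M(2\times2,R)$ with $\mu^\phi\in\GL_2(S)$. The key point is that $\ker(\phi)\subset\rad(R)$ forces $\ker$ of the induced map $\M(2\times2,R)\to\M(2\times2,S)$ to lie in $\rad(\M(2\times2,R))$ (the radical of a full matrix ring is the full matrix ring over the radical), and an element of a ring is a unit whenever its image in the quotient by a subset of the radical is a unit — this is a standard lifting-of-units argument. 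Thus $\mu\in\GL_2(R)$, so $R(a,b)\in\PP(R)$, and $R(a,b)^{\bar\phi}=S(\bar a,\bar b)$.

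For (3): assume $R=\ker(\phi)\times R'$ internally as a direct product of ideals, so $R'$ is a ring with identity $e$ (the complementary central idempotent) and $\phi$ restricts to an isomorphism $R'\to S$. Given $S(\bar a,\bar b)$ with $\SMat2{\bar a&\bar b\\\bar c&\bar d}\in\GL_2(S)$, transport this matrix back to an invertible matrix over $R'$ via the isomorphism $R'\cong S$, then regard it as a matrix over $R$ by adding the identity of $\ker(\phi)$ on the diagonal; concretely, if $\SMat2{a'&b'\\c'&d'}$ is invertible over $R'$ with inverse $\SMat2{p'&q'\\r'&s'}$, then $\SMat2{a'+f&b'\\c'&d'+f}$ is invertible over $R$, where $f$ is the identity of $\ker(\phi)$, with the analogous inverse. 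This produces $\gamma\in\GL_2(R)$ with $\gamma^\phi=\SMat2{\bar a&\bar b\\\bar c&\bar d}$, and then $R(1,0)^\gamma\in\PP(R)$ maps under $\bar\phi$ to $S(1,0)^{\gamma^\phi}=S(\bar a,\bar b)$.

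The main obstacle is case (2): one must make precise the claim that invertibility descends along $\phi$ when $\ker(\phi)$ is contained in the radical. The clean way is to pass to $\M(2\times2,-)$, note $\rad(\M(2\times2,R))=\M(2\times2,\rad(R))$, and then invoke the elementary fact that if $I\subseteq\rad(A)$ is an ideal of a ring $A$ and $a+I$ is a unit in $A/I$, then $a$ is a unit in $A$. Cases (1) and (3) are essentially bookkeeping once the lifting of elementary transvections, respectively the structure of a direct product of ideals, is spelled out.
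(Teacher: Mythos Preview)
Your proof is correct and for parts (1) and (3) follows essentially the same route as the paper: lifting elementary transvections through the surjection for (1), and exploiting the decomposition $\GL_2(R)\cong\GL_2(\ker\phi)\times\GL_2(R')$ for (3). For part (2) the paper simply cites an external reference (\cite{blu+s-95}, Lemma 1.14), whereas you give a self-contained argument via $\rad(\M(2\times2,R))=\M(2\times2,\rad R)$ and lifting of units modulo an ideal contained in the radical; this is exactly the alternative the paper itself sketches in the remark immediately following the proof, so there is no genuine divergence in method.
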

\begin{proof}
(1): Consider a point $q\in\PP(S)$. By Lemma \ref{orbitE2} we have
$q=S(1,0)^\eta$, where $\eta\in\E_2(S)$, i.e., $\eta$ is a product
of elementary transvections. Since $\phi:R\to S$ is surjective,
each elementary transvection has a preimage under
$\phi:\M(2\times 2,R)\to\M(2\times 2,S)$ that is an elementary
transvection over $R$. Hence $\eta=\gamma^\phi$, where $\gamma\in\E_2(R)$,
and so by \ref{homo}(2) we obtain
$q=R(1,0)^{\bar\phi\eta}=R(1,0)^{\gamma\bar\phi}\in
\PP(R)^{\bar\phi}$.\\
(2): Follows from \cite{blu+s-95}, Lemma 1.14.\\
(3): In this case, $\GL_2(R)$ consists exactly of the sums
$\gamma+\gamma'$, where $\gamma\in\GL_2(\ker(\phi))$
and $\gamma'\in\GL_2(R')$. Moreover, $\phi|_{\GL_2(R')}:\GL_2(R')\to
\GL_2(S)$ is an isomorphism of groups. This yields the assertion.
\end{proof}

Note that one could also use Proposition \ref{nondistEmb}
in order to prove assertion (2) above, since the radical of $\M(2\times 2,R)$
consists exactly of all matrices with entries in $\rad(R)$.

\section{Projective representations}

The projective representations we are now aiming at
are based upon the following.

\begin{rem}\label{isoProj}(see \cite{blunck-99}).
Let $U$ be a left vector space over a field $K$, and let $S=\End_K(U)$ be its
endomorphism ring. Moreover, let $\cG$ be the set of all subspaces of the
projective space $\PP(K,U\times U)$ that are isomorphic to one of their
complements. Then $$\Psi:\PP(S)\to \cG: S(\alpha,\beta)\mapsto
U^{(\alpha,\beta)}:= \{(u^\alpha,u^\beta)\mid u\in U\}$$ is a well-defined
bijection mapping distant points of $\PP(S)$ to complementary subspaces in
$\cG$ and non-distant points to non-complementary subspaces. Moreover, the
groups $\GL_2(S)$ and $\Aut_K(U\times U)$ are isomorphic and their actions on
$\PP(S)$ and on $\cG$, respectively, are equivalent via $\Psi$. In particular,
the mappings induced on $\cG$ by $\GL_2(S)$ arise from {\em projective
collineations} of the projective space $\PP(K,U\times U)$.
\end{rem}

Let now   $K$ be a field  and let
 $R$ be a ring. A left vector space $U$ over $K$ is called   a
{\em $(K,R)$-bimodule}, if $U$ is  a (unitary) right $R$-module
such that for all $k\in K$, $u\in U$, $a\in R$ the equality
$k(u\cdot a)=(ku)\cdot a$ holds. If $U$ is a $(K,R)$-bimodule,
then $\phi:R\to \End_K(U)$ with $a^\phi=\rho_a:u\mapsto u\cdot a$ is a
ring homomorphism.

If, on the other hand,
there is a homomorphism $\phi:R\to \End_K(U)$, then $U$ becomes a
$(K,R)$-bimodule by setting $u\cdot a:=u^{\rho_a}$, where $\rho_a=a^\phi$.
A homomorphism $\phi:R\to \End_K(U)$ is also called a
{\em $K$-linear representation}
of $R$.

So, the concepts of a $K$-linear representation of $R$ and
a $(K,R)$-bimodule are equivalent. Whenever we consider a $(K,R)$-bimodule
$U$, we denote by $\phi$ the associated linear representation, and
for $a\in R$ we write $\rho_a$ for the endomorphism $a^\phi:u\mapsto u\cdot a$.

A $(K,R)$-bimodule $U$ and  the associated linear representation
$\phi$ are called {\em faithful}, exactly if $\phi$ is an injection.

Combining
\ref{homo} and \ref{isoProj}, we obtain our main result:

\begin{thm}\label{projRep}
Let $U$ be a $(K,R)$-bimodule. Then the mapping
$$\Phi:=\bar\phi\Psi:\PP(R)\to \cG:R(a,b)\mapsto 
U^{(\rho_a,\rho_b)}$$
 maps distant points of $\PP(R)$
to complementary subspaces in $\PP(K,U\times U)$.
The bimodule  $U$ is faithful if, and only  if, $\Phi$ is injective.
\end{thm}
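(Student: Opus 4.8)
The plan is to obtain Theorem~\ref{projRep} essentially for free by combining the two ingredients already assembled: the induced homomorphism $\bar\phi:\PP(R)\to\PP(S)$ of Proposition~\ref{homo}, where $S=\End_K(U)$ and $\phi:R\to S$ is the linear representation associated with the bimodule $U$, and the bijection $\Psi:\PP(S)\to\cG$ of Remark~\ref{isoProj}. First I would note that both maps are well-defined (this is the content of \ref{homo} and \ref{isoProj}), so the composite $\Phi=\bar\phi\Psi$ is a well-defined map $\PP(R)\to\cG$. Next I would unwind the definition on a representative: for an admissible pair $(a,b)\in R^2$ one has $R(a,b)^{\bar\phi}=S(a^\phi,b^\phi)=S(\rho_a,\rho_b)$, and then $S(\rho_a,\rho_b)^\Psi=U^{(\rho_a,\rho_b)}=\{(u\cdot a,u\cdot b)\mid u\in U\}$, which is exactly the formula claimed for $\Phi$.

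For the preservation of distance, the argument is a two-step implication. If $p\dis_R q$ in $\PP(R)$, then by Proposition~\ref{homo}(1) the homomorphism $\bar\phi$ gives $p^{\bar\phi}\dis_S q^{\bar\phi}$ in $\PP(S)$; and by Remark~\ref{isoProj} the bijection $\Psi$ carries distant points of $\PP(S)$ to complementary subspaces of $\PP(K,U\times U)$. Composing, $p^\Phi$ and $q^\Phi$ are complementary, which is the first assertion.

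For the injectivity criterion, I would argue as follows. Since $\Psi$ is a bijection, $\Phi=\bar\phi\Psi$ is injective if and only if $\bar\phi$ is injective. By Proposition~\ref{homo}(3), $\bar\phi$ is injective if and only if $\phi$ is injective, i.e.\ if and only if the representation $\phi$ — equivalently the bimodule $U$ — is faithful. This chain of equivalences gives the second assertion.

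There is no real obstacle here: the theorem is a formal corollary, and the only thing to be careful about is bookkeeping — checking that the point $S(\rho_a,\rho_b)$ appearing after applying $\bar\phi$ is indeed a point of $\PP(S)$ to which $\Psi$ applies (guaranteed because $\bar\phi$ maps into $\PP(S)$ by construction, admissibility of $(a,b)$ being inherited by $(a^\phi,b^\phi)$ as recorded in Section~3) and that the formula $U^{(\rho_a,\rho_b)}$ matches the one in Remark~\ref{isoProj} with $\alpha=\rho_a$, $\beta=\rho_b$. If anything deserves a sentence of care, it is the well-definedness of $\Phi$ on points rather than on representatives, but this too is already subsumed in the well-definedness of $\bar\phi$ and $\Psi$ separately.
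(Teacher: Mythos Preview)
Your proposal is correct and follows exactly the approach of the paper, which simply states that the theorem is obtained by combining Proposition~\ref{homo} and Remark~\ref{isoProj}. If anything, you have spelled out more detail than the paper itself provides.
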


Thus, to each homomorphism $\phi:R\to\End_K(U)$ corresponds a mapping $\Phi$
(see above).  We call $\Phi$ a {\em projective representation} of $\PP(R)$, and
a {\em faithful} projective representation if $U$ is faithful. We are
interested in the image of $\PP(R)$ under a projective representation. If the
representation is faithful, then $\Phi:\PP(R)\to\PP(R)^{\Phi}$ is a bijection,
and the image $\PP(R)^{\Phi}$ can be seen as a model of $\PP(R)$ in the
projective space; we then call $\PP(R)^{\Phi}$ a {\em projective model} of
$\PP(R)$. Otherwise, one obtains a model of the projective line over another
ring:

\begin{prop}\label{factor}
Let $J=\ann(U)$ be the {\em annihilator} of $U$, i.e.,
the kernel of the representation $\phi:R\to\End_K(U)$.
Then the following statements
hold:
\begin{enumerate}
\item The mapping $\phi_f:R/J\to \End_K(U)$
with $\rho_{a+J}:u\mapsto u^{\rho_a}$ is a faithful $K$-linear representation
of~$R/J$. Hence $\Phi_f={\overline{\phi_f}}\Psi$ is a faithful projective
representation of $\PP(R/J)$.
\item The projective model $\PP(R/J)^{\Phi_f}$ contains $\PP(R)^\Phi$.
\item The mapping $\bar\pi:\PP(R)\to\PP(R/J)$ induced by the canonical
epimorphism $\pi:R\to R/J$ is surjective if, and only if,
$\PP(R/J)^{\Phi_f}=\PP(R)^\Phi$.
\end{enumerate}
\end{prop}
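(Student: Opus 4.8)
The plan is to verify the three assertions essentially by unwinding the definitions and invoking the factorization $\phi = \pi\,\phi_f$ of the representation through $R/J$. First, for (1), I would observe that the map $a+J \mapsto \rho_{a+J}$ is well-defined precisely because $J = \ann(U) = \ker(\phi)$: if $a+J = a'+J$ then $a-a' \in J$, so $\rho_{a-a'} = 0$ and hence $\rho_a = \rho_{a'}$. It is a ring homomorphism since $\phi$ is, and it is injective because its kernel is $\{a+J : \rho_a = 0\} = J/J = 0$. Thus $\phi_f$ is a faithful $K$-linear representation of $R/J$, and by Theorem~\ref{projRep} (applied to the faithful $(K,R/J)$-bimodule $U$) the associated $\Phi_f = \overline{\phi_f}\Psi$ is a faithful projective representation of $\PP(R/J)$.

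For (2), the key point is the identity $\phi = \pi\,\phi_f$ at the level of the induced maps, i.e.\ $\bar\phi = \bar\pi\,\overline{\phi_f}$, which follows from functoriality of $R \mapsto \PP(R)$ under ring homomorphisms (the construction of $\bar\phi$ is manifestly compatible with composition, since it is built from the matrix homomorphism $\M(2\times 2,-)$). Composing with $\Psi$ gives $\Phi = \bar\phi\Psi = \bar\pi\,\overline{\phi_f}\Psi = \bar\pi\,\Phi_f$. Hence $\PP(R)^\Phi = \PP(R)^{\bar\pi\Phi_f} = \big(\PP(R)^{\bar\pi}\big)^{\Phi_f} \subseteq \PP(R/J)^{\Phi_f}$, using that $\Phi_f$ is a well-defined map on all of $\PP(R/J)$.

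For (3), with $\Phi = \bar\pi\,\Phi_f$ established and $\Phi_f$ injective (part (1)), I would argue: $\PP(R/J)^{\Phi_f} = \PP(R)^\Phi = \big(\PP(R)^{\bar\pi}\big)^{\Phi_f}$ holds if and only if $\PP(R/J) = \PP(R)^{\bar\pi}$, i.e.\ if and only if $\bar\pi$ is surjective. The forward direction uses injectivity of $\Phi_f$ to cancel it: if $\Phi_f$ is applied to two subsets of $\PP(R/J)$ with equal image, the subsets coincide. The reverse direction is immediate by applying $\Phi_f$ to both sides of $\PP(R/J) = \PP(R)^{\bar\pi}$.

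I expect the only subtle point is the compatibility $\bar\phi = \bar\pi\,\overline{\phi_f}$ — making sure that the induced map on projective lines really is functorial, which requires that an admissible representative $(a,b)$ is sent to an admissible representative at each stage (this is exactly what was noted before the definition of $\bar\phi$ in Section~3) and that $\Psi$ from Remark~\ref{isoProj} intertwines everything correctly; once that bookkeeping is in place, (2) and (3) are formal. The genuinely new content is just part (1), the well-definedness and faithfulness of $\phi_f$, which is routine.
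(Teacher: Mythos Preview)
Your proposal is correct. The paper does not supply a proof for this proposition; it states the three claims and moves on, treating them as immediate consequences of the definitions together with Theorem~\ref{projRep} and the construction of $\bar\phi$ from Section~3. Your argument spells out precisely the routine verification the authors had in mind: the factorization $\phi = \pi\,\phi_f$ through the kernel $J$, the resulting identity $\Phi = \bar\pi\,\Phi_f$ obtained from the evident functoriality $\overline{\psi\chi} = \bar\psi\,\bar\chi$ of the passage $R\mapsto\PP(R)$, and then the use of injectivity of $\Phi_f$ to deduce~(3). There is nothing to correct or add.
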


Recall that Proposition \ref{surject} gives conditions under which
the assumptions of statement (3) are met.

The representation $\phi:R\to S=\End_K(U)$ gives rise to a group homomorphism
$\phi:\GL_2(R)\to \GL_2(S)\cong\Aut_K(U\times U)$.   Using \ref{homo}(2) and
\ref{isoProj} we obtain  the following:

\begin{prop}\label{action}
Let $U$ be a $(K,R)$-bimodule, and let $\gamma\in\GL_2(R)$. Then the  induced
mapping $$\PP(R)^\Phi\to\PP(R)^\Phi:R(a,b)^\Phi\mapsto R(a,b)^{\gamma\Phi}$$ is
induced by a projective collineation of $\PP(K,U\times U)$.
\end{prop}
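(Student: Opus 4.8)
The plan is to assemble the statement from the pieces already proved, since Proposition~\ref{action} is essentially a bookkeeping corollary of \ref{homo}(2) and \ref{isoProj}. First I would fix $\gamma\in\GL_2(R)$ and set $S=\End_K(U)$, so that $\phi:R\to S$ induces a group homomorphism $\phi:\GL_2(R)\to\GL_2(S)$; write $\gamma^\phi\in\GL_2(S)$ for the image. By \ref{isoProj} the group $\GL_2(S)$ is isomorphic to $\Aut_K(U\times U)$, and under the bijection $\Psi$ the action of an element $\delta\in\GL_2(S)$ on $\PP(S)$ corresponds to the action of the associated element of $\Aut_K(U\times U)$ on $\cG$; moreover every such map on $\cG$ arises from a projective collineation of $\PP(K,U\times U)$. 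So the automorphism $\gamma^\phi$ acts on $\cG$ as a projective collineation $\kappa$, say.

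Next I would compare the two mappings on $\PP(R)^\Phi$. For a point $R(a,b)\in\PP(R)$ we have $R(a,b)^{\gamma\Phi}=R(a,b)^{\gamma\bar\phi\Psi}$, and by \ref{homo}(2) the mappings $\bar\phi$ and the $\GL_2$-action commute up to applying $\gamma^\phi$ on the target side, i.e.\ $R(a,b)^{\gamma\bar\phi}=R(a,b)^{\bar\phi\gamma^\phi}$. Applying $\Psi$ and using that $\Psi$ intertwines the $\GL_2(S)$-action on $\PP(S)$ with the collineation action on $\cG$, one gets
$$
R(a,b)^{\gamma\Phi}=\bigl(R(a,b)^{\bar\phi}\bigr)^{\gamma^\phi\Psi}
=\bigl(R(a,b)^{\bar\phi\Psi}\bigr)^{\kappa}
=\bigl(R(a,b)^{\Phi}\bigr)^{\kappa}.
$$
Thus the map $R(a,b)^\Phi\mapsto R(a,b)^{\gamma\Phi}$ on $\PP(R)^\Phi$ is precisely the restriction of $\kappa$ to $\PP(R)^\Phi$, which is the restriction of a projective collineation of $\PP(K,U\times U)$, as claimed.

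The only genuine subtlety — and the step I would treat most carefully — is well-definedness of the displayed map: a priori the formula $R(a,b)^\Phi\mapsto R(a,b)^{\gamma\Phi}$ depends on the point $R(a,b)\in\PP(R)$ rather than only on its image $R(a,b)^\Phi$, and this is a real issue precisely when $U$ is not faithful, so that $\Phi$ need not be injective. But the computation above resolves this automatically: it exhibits the prescribed assignment as the restriction of the single collineation $\kappa$, which is obviously well-defined on the subset $\PP(R)^\Phi\subseteq\cG$. In other words, one does not verify well-definedness separately; one simply observes that whatever representative $R(a,b)$ one picks for a given point of $\PP(R)^\Phi$, the image $R(a,b)^{\gamma\Phi}$ equals $\kappa$ applied to that point, hence is independent of the choice. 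No other obstacle arises; the remaining content is the routine translation through the isomorphisms $\GL_2(S)\cong\Aut_K(U\times U)$ and $\PP(S)\cong\cG$ recorded in \ref{isoProj}.
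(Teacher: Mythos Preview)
Your proof is correct and follows exactly the route the paper takes: the paper simply notes that $\phi$ induces a group homomorphism $\GL_2(R)\to\GL_2(S)\cong\Aut_K(U\times U)$ and then invokes \ref{homo}(2) and \ref{isoProj}, which is precisely your argument. Your explicit treatment of well-definedness in the non-faithful case is a welcome addition that the paper leaves implicit.
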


Finally,  Proposition
\ref{nondistEmb} yields the following:

\begin{prop}\label{nonComp}
Let $U$ be a  $(K,R)$-bimodule. Then the corresponding projective
representation $\Phi$ maps non-distant points to non-complementary subspaces
exactly if for each $a\in R$ the condition $\rho_a\in\Aut_K(U)$ implies $a\in
R^*$.
\end{prop}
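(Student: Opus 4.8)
The plan is to derive Proposition \ref{nonComp} by combining Theorem \ref{projRep} (or rather its underlying ingredients) with the characterization of non-distance-preserving in Proposition \ref{nondistEmb}, applied to the representation homomorphism $\phi:R\to S=\End_K(U)$. Recall that $\Phi=\bar\phi\Psi$, where $\Psi:\PP(S)\to\cG$ is the bijection of Remark \ref{isoProj}. The key observation is that $\Psi$ maps distant points to complementary subspaces \emph{and} non-distant points to non-complementary subspaces; in other words $\Psi$ reflects and preserves distance/complementarity exactly. Hence for $p,q\in\PP(R)$ we have $p^{\Phi}$, $q^{\Phi}$ complementary $\iff$ $p^{\bar\phi}\dis_S q^{\bar\phi}$. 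Therefore $\Phi$ maps non-distant points to non-complementary subspaces if and only if for all $p,q\in\PP(R)$ the implication $p^{\bar\phi}\dis_S q^{\bar\phi}\Rightarrow p\dis_R q$ holds, which is precisely condition (1) of Proposition \ref{nondistEmb}.

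Next I would invoke the equivalence (1)$\Leftrightarrow$(2) of Proposition \ref{nondistEmb}: condition (1) holds for $\bar\phi$ if and only if for all $y\in R$, $y^\phi\in S^*$ implies $y\in R^*$. It remains only to translate $S^*$ and $y^\phi$ into the language of the bimodule $U$. By definition $y^\phi=\rho_y$, and $S^*=\End_K(U)^*=\Aut_K(U)$, the group of invertible $K$-linear endomorphisms of $U$. Thus condition (2) reads: for each $a\in R$, $\rho_a\in\Aut_K(U)$ implies $a\in R^*$. This is exactly the stated condition, so the proof is complete.

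The argument is essentially a chain of translations, so there is no serious obstacle; the only point requiring a moment's care is the first step, namely justifying that complementarity of $p^{\Phi}$ and $q^{\Phi}$ is equivalent to distance of $p^{\bar\phi}$ and $q^{\bar\phi}$ in $\PP(S)$. This uses the full strength of Remark \ref{isoProj}, in particular the clause that $\Psi$ sends non-distant points to non-complementary subspaces (not merely distant points to complementary ones); without that clause one would only get one direction. Once this is in hand, everything else is the identification $S^*=\Aut_K(U)$ together with a direct citation of Proposition \ref{nondistEmb}. I would present the proof in roughly four sentences: reduce complementarity to $\dis_S$ via $\Psi$, reduce non-distance-preservation of $\Phi$ to condition (1) of \ref{nondistEmb}, apply (1)$\Leftrightarrow$(2) of \ref{nondistEmb}, and finally rewrite condition (2) in terms of $\rho_a$ and $\Aut_K(U)$.

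\begin{proof}
By Remark \ref{isoProj}, the bijection $\Psi:\PP(S)\to\cG$ (with $S=\End_K(U)$) maps distant points to complementary subspaces and non-distant points to non-complementary subspaces. Hence, since $\Phi=\bar\phi\Psi$, for $p,q\in\PP(R)$ the images $p^\Phi$, $q^\Phi$ are complementary in $\PP(K,U\times U)$ if, and only if, $p^{\bar\phi}\dis_S q^{\bar\phi}$. Consequently, $\Phi$ maps non-distant points to non-complementary subspaces precisely when
$$\forall p,q\in\PP(R):\ p^{\bar\phi}\dis_S q^{\bar\phi}\Rightarrow p\dis_R q,$$
which is condition (1) of Proposition \ref{nondistEmb}. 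By the equivalence (1)$\Leftrightarrow$(2) of that proposition, this holds if, and only if, for each $a\in R$ the condition $a^\phi\in S^*$ implies $a\in R^*$. Since $a^\phi=\rho_a$ and $S^*=\End_K(U)^*=\Aut_K(U)$, this is exactly the assertion.
\end{proof}
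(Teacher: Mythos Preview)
Your proof is correct and follows exactly the approach indicated by the paper, which derives the result directly from Proposition~\ref{nondistEmb}. You have simply made explicit the translation step via Remark~\ref{isoProj} (that $\Psi$ preserves and reflects the correspondence between distance and complementarity), which the paper leaves implicit in its one-line reference.
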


Note that from $\rho_a\in\Aut_K(U)$ and $a\in R^*$ one obtains
that $(\rho_a)^{-1}
=\rho_{a^{-1}}$.

We mention two classes of examples where the condition of
Proposition \ref{nonComp}
is satisfied.

\begin{exas}
\begin{enumerate}
\item Let $R$ contain $K$ as a subfield. Then $U=R$ is a
left vector space over $K$, and
$\phi:R\to \End_K(U)$ with $\rho_a:x\mapsto xa$ is a faithful linear
representation of $R$, called the {\em regular representation}.
In this case $\Phi$ is the
identity, where the submodule $R(a,b)\in\PP(R)$ is considered as
a projective subspace of $\PP(K,U\times U)$. So points of $\PP(R)$
are distant exactly if their $\Phi$-images are complementary.
This reflects the algebraic fact that the endomorphism
$\rho_a:R\to R: x\mapsto xa$ is a bijection exactly if $a\in R^*$.
\item Let $U$ be a faithful $(K,R)$-bimodule. Assume moreover that $R$
contains a subfield $L$ such that $R$ is a finite-dimensional left
vector space over $L$. Then the projective representation
$\Phi$ maps non-distant points to non-complementary
subspaces:\\
In view of (1), it suffices to show that for
each $a\in R$ with $\rho_a\in\Aut_K(U)$ the
$L$-linear mapping $R\to R:x\mapsto xa$ is injective.
Suppose $xa=0$ for $x\in R$. Then for all $u\in U$ we have
$0=u\cdot0=u\cdot(xa)=(u\cdot x)^{\rho_a}$. Since $\rho_a$ is an automorphism, this
implies $u\cdot x=0$ for all $u\in U$, and hence $x=0$ because $U$ is a
faithful $R$-module.
\end{enumerate}
\end{exas}

We proceed by giving an example of a
faithful projective representation where non-distant points
appear as complementary subspaces:

\begin{exa}\label{poly}
Let $K$ be any commutative field, let
$R$  be the polynomial ring $R=K[X]$, and let $U=K(X)$ be its field
of fractions. Then $U$ contains $K$ and $R$, and thus is a faithful
$(K,R)$-bimodule in a natural way.
Obviously, $\rho_X:u\mapsto uX$ is a bijection on $U$, but $X\not\in R^*$.

This means that e.g.
$R(1,0)$ and $R(1,X)$ are non-distant points of $\PP(R)$,
but their images $U^{(1,0)}=U\times \{0\}$ and $U^{(1,\rho_X)}=
\{(u,uX)\mid u\in U\}$ are complementary subspaces of $\PP(K,U\times U)$.
Note that $R(1,0)$ and $R(1,X)$, considered as submodules of $R^2$,
also intersect trivially, but they do not span $R^2$ (compare
\ref{nondist}).

Note, moreover,
that  we could also interpret the elements $R(a,b)^{\Phi}
=U(a,b)$
as points of the projective line over the field $U$. Hence any two
such elements must be complementary.

In a similar way one can also construct examples where
$R$ is not contained in any field:   Let $R$ and $U$ be as above.
Let $R[\eps]$ be the ring of {\em dual numbers} over $R$,
 with $\eps$  central, $\eps\not\in K$,  and $\eps^2=0$. Then $\eps $ is
a zero-divisor and hence $R[\eps]$ is not embeddable into any field.
Now take $U[\eps]$ and proceed as above.
\end{exa}


Let $U$ be a $(K,R)$-bimodule.
A subset $U'\subset U$ is called
a {\em sub-bimodule} of $U$, if $U'$ is a subspace of the left
vector space $U$ over $K$ and at the same time a submodule of
the right $R$-module~$U$. The linear representation of $R$ given by
the bimodule $U'$ is $\phi':a\mapsto{\rho_a}|_{U'}$.
The faithful representation
$(\phi')_f:R/\ann(U')\to \End_K(U')$ will be  called the {\em induced
faithful representation}.

The projective representation $\Phi'$ associated to $\phi'$
maps the points of $\PP(R)$ to certain subspaces
of the projective space $\PP(K,U'\times U')$, more exactly,
$\PP(R)^{\Phi'}$ is a subset of the  set $\cG'$
of all subspaces of $\PP(K,U'\times U')$ that are isomorphic to
one of their complements.

Now $\PP(K,U'\times U')$ is a projective subspace
of $\PP(K,U\times U)$, and we can compare the images of $\PP(R)$ under
the projective representations $\Phi$ and $\Phi'$. One
obtains the following geometric interpretation:

\begin{prop}\label{schlauch}
Let $U'$ be a sub-bimodule of the $(K,R)$-bimodule $U$, and let
$\Phi'$ and $\Phi$ be the associated projective representations
of $\PP(R)$. Then for each $p\in\PP(R)$ we have
$$p^{\Phi'}=p^{\Phi}\cap (U'\times U').$$
In particular, each $p^{\Phi}$ meets the projective
subspace $\PP(K,U'\times U')$ in an element of $\cG'$.
\end{prop}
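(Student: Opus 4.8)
The plan is to compute both sides of the claimed equality directly from the definitions, using the explicit description of $\Phi$ from Theorem~\ref{projRep}. Fix a point $p\in\PP(R)$ and write $p=R(a,b)$ with $(a,b)\in R^2$ admissible. Then by definition $p^\Phi=U^{(\rho_a,\rho_b)}=\{(u^{\rho_a},u^{\rho_b})\mid u\in U\}\subseteq U\times U$, while $p^{\Phi'}=(U')^{(\rho_a|_{U'},\,\rho_b|_{U'})}=\{(v^{\rho_a},v^{\rho_b})\mid v\in U'\}\subseteq U'\times U'$. So the statement to prove reduces to the set-theoretic identity
$$\{(v^{\rho_a},v^{\rho_b})\mid v\in U'\}=\{(u^{\rho_a},u^{\rho_b})\mid u\in U\}\cap(U'\times U').$$

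The inclusion ``$\subseteq$'' is immediate: if $v\in U'$, then $(v^{\rho_a},v^{\rho_b})$ obviously lies in $p^\Phi$ (take $u=v$), and since $U'$ is a sub-bimodule, $\rho_a$ and $\rho_b$ map $U'$ into $U'$, so both coordinates lie in $U'$, i.e.\ the pair lies in $U'\times U'$. For the reverse inclusion, suppose $(u^{\rho_a},u^{\rho_b})\in U'\times U'$ for some $u\in U$; I must produce $v\in U'$ with $(v^{\rho_a},v^{\rho_b})=(u^{\rho_a},u^{\rho_b})$. The natural candidate is $v=u$ itself, but the difficulty is that a priori $u$ need not lie in $U'$ — we only know its images under $\rho_a$ and $\rho_b$ do. This is the main obstacle, and it is where admissibility of $(a,b)$ enters: pick $x,y\in R$ with $ax+by=1$ (admissible pairs are unimodular). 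Then $u=u\cdot 1=u\cdot(ax+by)=(u^{\rho_a})\cdot x+(u^{\rho_b})\cdot y$, and since $u^{\rho_a},u^{\rho_b}\in U'$ and $U'$ is a right $R$-submodule, this exhibits $u$ as an element of $U'$. Hence $u\in U'$ and we may take $v=u$, completing the reverse inclusion and the proof of the displayed identity, which is exactly $p^{\Phi'}=p^\Phi\cap(U'\times U')$.

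For the final sentence of the proposition, note that $\PP(K,U'\times U')$ is precisely the projective space on the subspace $U'\times U'$ of $U\times U$, so $p^\Phi\cap(U'\times U')$ is the intersection of the subspace $p^\Phi$ with this projective subspace; by the identity just proved, this intersection equals $p^{\Phi'}$, which by the discussion preceding the proposition (the analogue of Remark~\ref{isoProj} applied to $U'$) belongs to $\cG'$. So every $p^\Phi$ meets $\PP(K,U'\times U')$ in an element of $\cG'$, as claimed. The only subtlety worth a remark is the use of an admissible — not merely unimodular — representative, which is legitimate under the standing convention adopted in Section~3 that points are always written with admissible coordinates; in fact only unimodularity of $(a,b)$ is used in the argument.
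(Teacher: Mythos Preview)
Your argument is correct. The paper, however, takes a different route: it verifies the equality only for the base point $p=R(1,0)$, where it is the trivial identity $U'\times\{0\}=(U\times\{0\})\cap(U'\times U')$, and then transports this to an arbitrary $p=R(1,0)^\gamma$ by observing that the automorphism $\gamma^\phi$ of $U\times U$ leaves $U'\times U'$ invariant and restricts there to $\gamma^{\phi'}$. Your approach instead computes directly with an admissible representative $(a,b)$ and uses unimodularity ($ax+by=1$) to force $u\in U'$ from $u^{\rho_a},u^{\rho_b}\in U'$; this is a nice elementary argument that isolates exactly which algebraic property of admissible pairs is needed (only unimodularity, as you note). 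The paper's approach, by contrast, hides this computation inside the $\GL_2(R)$-equivariance already established in Propositions~\ref{homo} and~\ref{action}, and has the advantage that the same ``base case plus group action'' template is reused verbatim for the companion result Proposition~\ref{dirSumA}.
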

\begin{proof}
First consider $p=R(1,0)$. Then
$p^{\Phi'}=U'\times\{0\}=
(U\times \{0\})\cap (U'\times U')=p^{\Phi}\cap(U'\times U')$.
Now consider an arbitrary $p\in\PP(R)$. Then $p=R(1,0)^\gamma$ for
some $\gamma\in\GL_2(R)$. The induced automorphism $\gamma^\phi$
of $U\times U$ leaves $U'\times U'$ invariant, it coincides on
$U'\times U'$ with $\gamma^{\phi'}\in\Aut_K(U'\times U')$. This
yields the assertion.
\end{proof}

Note that the ${\Phi'}$-image of  $\PP(R)$ is contained
in the image of  $\PP(R/\ann(U'))$ under the induced faithful representation
$(\Phi')_f$. According to \ref{factor}(3),
the two sets  coincide exactly if
the mapping $\bar\pi:\PP(R)\to
\PP(R/\ann(U'))$,
associated to the canonical epimorphism $\pi:R\to R/\ann(U')$,
is surjective.

\begin{prop}\label{dirSumA}
Let $U=U'\oplus U''$ be a  $(K,R)$-bimodule. Let $\phi$, $\phi'$, $\phi''$ be
the associated representations of $R$. Then for each    $p\in\PP(R)$ we have
$p^{\Phi}=p^{\Phi'}\oplus p^{\Phi''}$.
\end{prop}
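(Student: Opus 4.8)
The plan is to mimic the structure of the proof of Proposition \ref{schlauch}: reduce to the base point $R(1,0)$, handle it by a direct computation, and then transport the statement to an arbitrary point by means of a suitable $\gamma \in \GL_2(R)$, using that the three representations are induced by restriction/quotient of the same bimodule action. First I would unwind the definitions. For $p = R(a,b) \in \PP(R)$ we have $p^{\Phi} = U^{(\rho_a,\rho_b)} = \{(u\cdot a, u\cdot b) \mid u \in U\} \subseteq U \times U$, and similarly $p^{\Phi'} = \{(u'\cdot a, u'\cdot b) \mid u' \in U'\}$, $p^{\Phi''} = \{(u''\cdot a, u''\cdot b) \mid u'' \in U''\}$, where now the actions are the restrictions of $\rho_a, \rho_b$ to $U'$, resp.\ $U''$. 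Under the identification $U \times U = (U' \oplus U'') \times (U' \oplus U'') = (U' \times U') \oplus (U'' \times U'')$, I want to show $p^{\Phi} = p^{\Phi'} \oplus p^{\Phi''}$ as subspaces of the left $K$-vector space $U \times U$.

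For the base point $p = R(1,0)$ this is immediate: $p^{\Phi} = U \times \{0\} = (U' \times \{0\}) \oplus (U'' \times \{0\}) = p^{\Phi'} \oplus p^{\Phi''}$, where the middle equality uses $U = U' \oplus U''$. For a general $p$, write $p = R(1,0)^{\gamma}$ with $\gamma \in \GL_2(R)$, and recall from Remark \ref{isoProj} that the action of $\gamma$ on $\PP(R)$ corresponds, via $\Psi$, to the automorphism $\gamma^{\phi} \in \Aut_K(U \times U)$ obtained by applying $\rho_{(\cdot)}$ entrywise to $\gamma$. The key observation is that since $U'$ and $U''$ are sub-bimodules, each $\rho_c$ ($c \in R$) leaves both $U'$ and $U''$ invariant, so $\gamma^{\phi}$ leaves $U' \times U'$ and $U'' \times U''$ invariant and restricts there to $\gamma^{\phi'}$, resp.\ $\gamma^{\phi''}$; in particular $\gamma^{\phi} = \gamma^{\phi'} \oplus \gamma^{\phi''}$ under the above direct-sum decomposition of $U \times U$. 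Applying the invertible $K$-linear map $\gamma^{\phi}$ to the base-point identity $R(1,0)^{\Phi} = R(1,0)^{\Phi'} \oplus R(1,0)^{\Phi''}$ and using $p^{\Phi} = (R(1,0)^{\Phi})^{\gamma^{\phi}}$ (and the analogous formulas for $\Phi'$, $\Phi''$, which hold because each of the three is of the form $\bar\psi\Psi$ and Remark \ref{isoProj} makes $\Psi$ equivariant), one gets $p^{\Phi} = (R(1,0)^{\Phi'})^{\gamma^{\phi'}} \oplus (R(1,0)^{\Phi''})^{\gamma^{\phi''}} = p^{\Phi'} \oplus p^{\Phi''}$, as desired.

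The only point requiring a little care — and the step I expect to be the main (minor) obstacle — is making the identification $U \times U \cong (U' \times U') \oplus (U'' \times U'')$ precise and checking that $\gamma^{\phi}$ really does respect it, i.e.\ that restricting $\gamma^{\phi}$ to $U' \times U'$ gives exactly $\gamma^{\phi'}$ (not merely \emph{a} $K$-automorphism of $U' \times U'$). This is exactly the bimodule-invariance argument already used at the end of the proof of Proposition \ref{schlauch}, applied simultaneously to the summand $U'$ and to the summand $U''$; since $\phi'$ and $\phi''$ are by definition $c \mapsto \rho_c|_{U'}$ and $c \mapsto \rho_c|_{U''}$, there is nothing further to verify beyond the fact that the entrywise operation defining $\gamma^{\phi}$ commutes with restriction, which is clear. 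Everything else is a routine unwinding of definitions, so I would keep the written proof short, essentially: treat $R(1,0)$, then conjugate by $\gamma$, invoking Remark \ref{isoProj} and the sub-bimodule invariance.
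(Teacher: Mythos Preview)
Your proposal is correct and follows essentially the same approach as the paper's proof: verify the identity for the base point $R(1,0)$ and then transport it to an arbitrary point via the action of $\GL_2(R)$, using that $\gamma^{\phi}$ leaves each summand invariant and restricts there to $\gamma^{\phi'}$, $\gamma^{\phi''}$. The only cosmetic difference is that the paper invokes Proposition~\ref{schlauch} to handle the base point (computing $p^{\Phi'}$ and $p^{\Phi''}$ as intersections with $U'\times U'$ and $U''\times U''$), whereas you do it by direct inspection; either way is immediate.
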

\begin{proof}
As in the proof of Proposition \ref{schlauch} we first verify
the assertion for $p=R(1,0)$ (with the help of \ref{schlauch})
and then use the action of $\GL_2(R)$.
\end{proof}

Let again $U'$ be a sub-bimodule of the $(K,R)$-bimodule $U$.
Then also $\widetilde U=U/U'$ is a $(K,R)$-bimodule, corresponding to
the representation $\widetilde\phi:R\to\End_K(\widetilde U)$,
where $\widetilde\rho_a:u+U'\mapsto u^{\rho_a}+U'$.
The kernel of this representation is the ideal
consisting of all $a\in R$ such that the image of $\rho_a$
is contained in $U'$. As above, we obtain an
{\em induced faithful representation} $(\widetilde\phi)_f:R/\ker(\widetilde\phi)
\to\End_K(\widetilde U)$.

The  projective representation $\widetilde\Phi$
maps $\PP(R)$ into the set $\widetilde \cG$ of all
subspaces of $\PP(K,\widetilde U\times
\widetilde U)$ that are isomorphic to one of their complements.
Now the projective space
$\PP(K,\widetilde U\times\widetilde U)$ is canonically isomorphic to
the projective space of all subspaces of  $\PP(K,U\times U)$
containing $U'\times U'$, because $(U\times U)/(U'\times U')
\cong \widetilde U\times \widetilde U$. We shall identify the elements of
$\widetilde\cG$ with their images under this isomorphism.
So we can compare
$\widetilde\Phi$ and $\Phi$, and the same procedure as before yields

\begin{prop}\label{ProjFactor}
Let $\widetilde U=U/U'$, and let $\widetilde\Phi$ be the associated projective
representation of $\PP(R)$.
Then for each $p\in\PP(R)$ we have
$$p^{\widetilde\Phi}=p^{\Phi}+ (U'\times U').$$
In particular, each $p^{\Phi}+ (U'\times U') $
is an element of $\widetilde\cG$.
\end{prop}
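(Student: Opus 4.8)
The plan is to imitate the proofs of Propositions \ref{schlauch} and \ref{dirSumA}: verify the formula on the base point $R(1,0)$ and then transport it by the action of $\GL_2(R)$. First I would pin down, once and for all, the identification that precedes the statement. Writing $\pi'$ for the composite of the quotient map $U\times U\to(U\times U)/(U'\times U')$ with the canonical isomorphism $(U\times U)/(U'\times U')\cong\widetilde U\times\widetilde U$, one has $\ker\pi'=U'\times U'$, so $\pi'$ induces an inclusion-preserving bijection between the subspaces of $U\times U$ containing $U'\times U'$ and \emph{all} subspaces of $\widetilde U\times\widetilde U$: a subspace $\widetilde W$ corresponds to its full preimage $(\pi')^{-1}(\widetilde W)$, and the subspace of $\widetilde U\times\widetilde U$ spanned by $\pi'(W)$ corresponds to $W+(U'\times U')$ for every subspace $W\subseteq U\times U$. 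With this in hand the base case is a direct computation: by Theorem \ref{projRep} applied to $\widetilde U$ we have $R(1,0)^{\widetilde\Phi}=\widetilde U^{(\id,0)}=\widetilde U\times\{0\}$, whose counterpart among the subspaces of $U\times U$ above $U'\times U'$ is $U\times U'=(U\times\{0\})+(U'\times U')=R(1,0)^{\Phi}+(U'\times U')$.

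Next I would take an arbitrary $p\in\PP(R)$ and write $p=R(1,0)^{\gamma}$ with $\gamma\in\GL_2(R)$. Since $U'$ is a sub-bimodule, the automorphism $\gamma^{\phi}\in\Aut_K(U\times U)$ leaves $U'\times U'$ invariant and therefore induces on the quotient precisely the automorphism $\gamma^{\widetilde\phi}\in\Aut_K(\widetilde U\times\widetilde U)$; equivalently, under the identification above, $\gamma^{\widetilde\phi}$ is the restriction of the lattice map induced by $\gamma^{\phi}$ to the subspaces containing $U'\times U'$. Using the $\GL_2$-compatibility for both $\Phi$ and $\widetilde\Phi$ (combine \ref{homo}(2) with \ref{isoProj}, as already exploited in \ref{action}), together with the elementary identity $(W+(U'\times U'))^{\gamma^{\phi}}=W^{\gamma^{\phi}}+(U'\times U')$, which holds because $(U'\times U')^{\gamma^{\phi}}=U'\times U'$, I obtain
$$p^{\widetilde\Phi}=\bigl(R(1,0)^{\widetilde\Phi}\bigr)^{\gamma^{\widetilde\phi}}
=\bigl(R(1,0)^{\Phi}+(U'\times U')\bigr)^{\gamma^{\phi}}
=\bigl(R(1,0)^{\Phi}\bigr)^{\gamma^{\phi}}+(U'\times U')
=p^{\Phi}+(U'\times U'),$$
which is the assertion. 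The ``in particular'' clause is then immediate, since $\widetilde\Phi$ takes values in $\widetilde\cG$ by Theorem \ref{projRep} applied to $\widetilde U$.

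The only step that needs genuine care — and the one I expect to be the main obstacle — is the bookkeeping around the identification of $\PP(K,\widetilde U\times\widetilde U)$ with the interval of subspaces of $\PP(K,U\times U)$ above $U'\times U'$: one must check that this identification intertwines the action of $\gamma^{\widetilde\phi}$ with that of $\gamma^{\phi}$, so that the transport of the base case is legitimate, and that $\pi'(W)$ really corresponds to $W+(U'\times U')$ rather than to some larger or smaller space. Once these compatibilities are recorded, the rest is the familiar ``verify on $R(1,0)$, then move by $\GL_2(R)$'' pattern used in the two preceding propositions.
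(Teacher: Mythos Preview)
Your proposal is correct and follows exactly the approach the paper intends: the paper does not write out a proof but simply says ``the same procedure as before yields'' Proposition~\ref{ProjFactor}, i.e., verify on $R(1,0)$ and transport by $\GL_2(R)$, precisely as in Propositions~\ref{schlauch} and~\ref{dirSumA}. Your careful bookkeeping around the identification $(U\times U)/(U'\times U')\cong\widetilde U\times\widetilde U$ and the compatibility of $\gamma^{\phi}$ with $\gamma^{\widetilde\phi}$ is just an explicit version of what the paper leaves implicit.
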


As before, one may also consider the induced faithful representation
$(\widetilde\Phi)_f$ of $\PP(R/\ker(\widetilde\phi))$.

\section{Examples}

In this section we study some examples. Note that we consider
only  rings $R$ that are
finite-dimensional left vector spaces over a subfield $K$. Then for
each ideal $I$ of $R$ also the ring
$R/I$ is finite dimensional over $K$, whence  $R/I$ is of stable rank $2$
and hence a $\GE_2$-ring (compare \ref{finDim} and \ref{SR2GE2}).
So Proposition \ref{surject} implies that
in all our examples the mapping $\bar\pi:\PP(R)\to\PP(R/I)$
induced by the canonical epimorphism $\pi:R\to R/I$ is surjective.

\begin{exa}\label{reguli}
Let $K=R$ be any (not necessarily commutative) field and let
$U=K^2$ with componentwise action $(x_1,x_2)\cdot k=(x_1k,x_2k)$.
Then
$U$ is the direct sum of the sub-bimodules $U_1=K(1,0)$ and $U_2=K(0,1)$,
on which $R=K$ acts faithfully in the natural way.
The  representations induced in the skew lines
$U_i\times U_i$ are faithful and map $\PP(K)$
onto the set of all points of $U_i\times U_i$.
Moreover, $\beta:=\Phi_1^{-1}\Phi_2$ is a
bijection
between these two projective lines, which is linearly induced and hence
a  projectivity.
The elements of the projective model $\PP(K)^{\Phi}$
in $\PP(K,U\times U)$ are exactly the  lines joining a point of
$U_1\times U_1$ and its $\beta$-image in $U_2\times U_2$. So
$\PP(K)^{\Phi}$ is a regulus
in $3$-space (compare \cite{brau-76}).

The same applies if $U=K^n$. Then one obtains a regulus in a
$(2n-1)$-dimensional projective space (see \cite{blunck-00a}),
i.e., a generalization to the not necessarily pappian case
of
a family of $(n-1)$-dimensional subspaces
on a Segre manifold $S_{n-1,1}$ (compare \cite{bura-61}).
\end{exa}

\begin{exa}
Example \ref{reguli} above can be modified in the following way: Let
$\alpha_1,\alpha_2:K\to K$ be field monomorphisms. Then $K$ acts
faithfully on $U=K^2$ via
$(x_1,x_2)\cdot k=(x_1k^{\alpha_1},x_2k^{\alpha_2})$.
The induced projective models of $\PP(K)$ in the projective lines
$U_i\times U_i$
are projective sublines over the subfields $K^{\alpha_i}$.
In general, the bijection $\beta$ between the two models
is not $K$-semilinearly induced.

We mention one  special case: If $K=\CC$, $\alpha_1=\id$, and $\alpha_2$
is the complex conjugation,
then the projective model of
$\PP(\CC)$ is a set of lines in the $3$-space $\PP(\CC,U\times U)$.
It can be interpreted as follows: The $\alpha_2$-semilinear bijection
$\beta$ extends to a collineation of order two which fixes a
Baer subspace (with $\RR$ as underlying field). The lines of the
projective model of $\PP(\CC)$ meet this Baer subspace in a regular
spread (elliptic linear congruence). See \cite{havl-94a} for a
generalization of this well-known classical result that the regular
spreads of a real $3$-space can be characterized (in the complexified
space) as those sets of lines that join complex-conjugate points of
two skew complex-conjugate lines.
\end{exa}

\begin{exa}
Let $K$ be any field.
Let $U=R=K^n$, with componentwise addition and multiplication.
For $i\in\{1,\ldots,n\}$, let $U_i=Kb_i$, where $b_i$
runs in the standard basis.
Then $U_i$ is a sub-bimodule of $U$, the induced faithful action is the
ordinary action of $K$. Hence the projective model  $\PP(R)^{\Phi}
=\PP(R)$ meets the line
$U_i\times U_i$ in all points. Moreover,
{\em each} $(n-1)$-dimensional
projective subspace
of $\PP(K,U\times U)$ that meets all the lines
$U_i\times U_i$ belongs to $\PP(R)$,  because $\GL_2(R)
\cong \GL_2(K)\times\ldots\times \GL_2(K)$.

 If $n=2$, the set $\PP(R)$ is a
generalization to the not necessarily pappian case
of a {\em hyperbolic linear congruence}.
\end{exa}

\begin{exa}\label{dual}
Let $K$ be any field. Let $U=R=K[\eps]$, where $\eps\not\in K$,
$\eps^2=0$ and $\eps k=k^\alpha \eps$ for some fixed $\alpha\in\Aut(K)$.
This is a ring of {\em twisted dual numbers} over $K$. It is a local
ring with $I=K\eps$ the maximal ideal of all non-invertible elements.
So $U'=I$ is a sub-bimodule of $U=R$, with $\ann(U')=I$, and on
$U'$ we have the induced faithful  representation $(\phi')_f$
of $R/I\cong K$ with $k\eps\cdot a=
ka^\alpha\eps$.
So each point of  $U'\times U'$
is incident with a line of our projective model $\PP(R)=\PP(R)^{\Phi}$.

Now consider the bimodule
$\widetilde U=R/U' \cong K$. The
kernel of the induced representation $\widetilde\phi$
is again $I$. As before, it is easily seen that
each plane through $U'\times U'$
contains a line of $\PP(R)$.

The relation $\notdis$ is an equivalence relation on $\PP(R)$, because
$R$ is a local ring. Easy calculations  show that
elements of $\PP(R)$  belong to the
same equivalence class exactly if they meet $U'\times U'$ in the same
point or, equivalently, if they together with $U'\times U'$  span the
same plane. So there is a bijection $\beta$ between the points of
$U'\times U'$
and the planes through $U'\times U'$  such that for each $p\in \PP(R)$
we have $p\subset (p\cap (U'\times U'))^\beta$. This bijection
$\beta$ is given by
 $K(k^\alpha\eps,l^\alpha\eps)\mapsto K(k,l)\oplus(U'\times U')$.

Moreover, one can compute that the projective model
 $\PP(R)$ consists  of {\em all} lines in $\PP(K,U\times U)$
that meet $U'\times U'$ in a unique point, say $q$, and then lie in the plane
$q^\beta$.

In case $\alpha=\id$ the bijection $\beta$ is a projectivity.
So then the  set $\PP(R)$ is a
generalization of a {\em parabolic linear
congruence}. The ring $R$ is
then the ordinary ring of dual numbers over $K$.
In the general case $\beta$ is only semilinearly induced. If
 $K=\CC$ and $\alpha$ is the complex conjugation, then $R$ is the ring of
{\em Study's quaternions} (see \cite{kar-k+85}, p.445).
\end{exa}

\begin{exa}
Let $R$ be the ring of upper triangular $2\times 2$-matrices with
entries in $K$. Then  $U=K^2$ is in a natural way a faithful
$(K,R)$-bimodule. Moreover, $U'=K(0,1)$ is a sub-bimodule
with $\ann(U')=\{\SMat2{a&b\\0&0}\mid a,b\in K\}$. So $R/\ann(U')\cong K$,
and the induced faithful representation is the ordinary action of $K$
on $U'$. This means that each point of $U'\times U'$ is on a
line of
the projective model $\PP(R)^{\Phi}$.

Now consider $\widetilde U=U/U'$. The kernel of the induced action
is  $J=\{\SMat2{0&b\\0&c}\mid b,c\in K\}$. So  $R/J\cong K$,
and also here we have the ordinary action of $K$ on $\widetilde U\cong K(1,0)$.
Hence each plane through $U'\times U'$
contains a line of $\PP(R)^{\Phi}$.

Up to now, we are in the same situation as in Example \ref{dual}.
An easy calculation shows that the projective model
$\PP(R)^\Phi$ consists of
{\em all} lines that meet $U'\times U'$ in a point.
This is the generalization of a {\em special linear complex} to the not
necessarily pappian case.
\end{exa}

\begin{exa}
Let $U=R=K[\eps,\delta]$
with $\eps\not\in K$, $\delta\not\in K[\eps]$, $\eps,\delta$ central,
and $\eps^2=\delta^2=
\eps\delta=0$. The  projective model
$\PP(R)^{\Phi}=\PP(R)$ is a set of planes in $5$-space.

The ring $R$ is  a local ring with maximal ideal $I=K\eps+K\delta=U'$.
Moreover, $\ann(U')=I$, and $R/I\cong K$ acts on $U'$ componentwise.
So according to \ref{reguli}
the induced model of $\PP(K)$ in the $3$-space $U'\times U'$ is a
regulus $\cR$.

Now consider $\widetilde U=R/U'$. Then $\ker(\widetilde\phi)=I$, and we have the
ordinary faithful action  of $K$ on $\widetilde U\cong K$. So
all hyperplanes ($4$-spaces)
through $U'\times U'$ contain an element of $\PP(R)$.

As in Example~\ref{dual}
the elements of $\PP(R)$ fall into equivalence classes
with respect to $\notdis$, such that equivalent elements have the
same intersection and the same join with $U'\times U'$. This yields a
bijection $\beta$ between  the regulus $\cR$ and the set
of all hyperplanes through $U'\times U'$. As in \ref{dual},
case $\alpha=\id$, this bijection is a
projectivity. A calculation shows that $\PP(R)$ consists of {\em all}
planes that meet the $3$-space $U'\times U'$ in an element of $\cR$, say
$X$, and then lie in the hyperplane~$X^\beta$.
\end{exa}




\bigskip
Institut f\"ur Geometrie\\ Technische Universit\"at\\ Wiedner Hauptstra{\ss}e
8--10\\ A--1040 Wien, Austria
\end{document}